\documentclass[10pt,a4paper,reqno]{amsart}
\usepackage[utf8]{inputenc}
\usepackage[T1]{fontenc}       
\usepackage{ae}
\usepackage[french,english]{babel}   
\usepackage{amsfonts}          
\usepackage{amsmath}
\usepackage{amsthm}
\usepackage{amssymb}
\usepackage{mathtools}
\usepackage{calc}
\usepackage{centernot}
\usepackage{color}
\usepackage[pdftex]{hyperref}
\usepackage{mathrsfs}
\usepackage[marginpar]{todo}

\newcommand{\ve}{\varepsilon}
\newcommand{\N}{\mathbb{N}}

\newcommand{\R}{\mathbb{R}}

\newcommand{\Rn}{\mathbb{R}^n}

\newcommand{\Ss}{\mathbb{S}}
\newcommand{\abs}[1]{\lvert #1 \rvert}

\newcommand{\ang}[1]{\langle #1 \rangle}

\newcommand{\loc}{{\rm loc}}

\newcommand{\QQ}{\mathcal{Q}}

\newcommand{\F}{\mathcal{F}}

\newcommand{\p}{\partial}

\newcommand{\dv}{\text{div }}

\newcommand{\inb}{\partial_{\infty}}
\newcommand{\an}{\sphericalangle}
\newcommand{\pinf}{\partial_{\infty}}
\newcommand{\bM}{\bar{M}}

\numberwithin{equation}{section}

\theoremstyle{plain}     
\newtheorem{thm}{Lause}[section]
\newtheorem{theo}[thm]{Theorem}
\newtheorem{lem}[thm]{Lemma}

\newtheorem{prop}[thm]{Proposition}
\newtheorem{rem}[thm]{Remark}

\theoremstyle{definition}

\title[Pinching condition]{Solvability of minimal graph equation under pointwise pinching 
condition for sectional curvatures}

\author{Jean-Baptiste Casteras}
\address{J.-B. Casteras, Departement de Mathematique
Universite libre de Bruxelles, CP 214, Boulevard du Triomphe, B-1050 Bruxelles, Belgium}
\email{jeanbaptiste.casteras@gmail.com}

\author{Esko Heinonen}
\address{E. Heinonen, Department of Mathematics and Statistics, P.O.B. 68 (Gustaf 
H\"alstr\"omin katu 2b), 00014 University
of Helsinki, Finland.}
\email{esko.heinonen@helsinki.fi}

\author{Ilkka Holopainen}
\address{I.Holopainen, Department of Mathematics and Statistics, P.O.B. 68 (Gustaf 
H\"alstr\"omin katu 2b), 00014 University
of Helsinki, Finland.}
\email{ilkka.holopainen@helsinki.fi}

\thanks{J.-B.C. supported by MIS F.4508.14 (FNRS)}
\thanks{E.H. supported by the Academy of Finland, project 252293 and the Wihuri Foundation.}
\thanks{I.H. supported by the Academy of Finland, project 252293.}

\keywords{minimal graph equation, Dirichlet problem, Hadamard manifold.}

\subjclass[2000]{Primary 58J32; Secondary 53C21}


\begin{document}

\begin{abstract}
 We study the asymptotic Dirichlet problem for the minimal graph equation on a Cartan-Hadamard
 manifold $M$ whose radial sectional curvatures outside a compact set satisfy an upper bound
    $$
      K(P)\le - \frac{\phi(\phi-1)}{r(x)^2}
    $$
 and a pointwise pinching condition
    $$
      \abs{K(P)}\le C_K\abs{K(P')}
    $$
 for some constants $\phi>1$ and $C_K\ge 1$, where $P$ and $P'$ are any 2-dimensional subspaces
 of $T_xM$ containing the (radial) vector $\nabla r(x)$ and $r(x)=d(o,x)$ is the distance to
 a fixed point $o\in M$. We solve the asymptotic Dirichlet problem with any continuous
 boundary data for dimensions $n=\dim M>4/\phi+1$.
\end{abstract}

\maketitle
 
 \section{Introduction}
\noindent In this paper we are interested in the asymptotic Dirichlet problem for minimal graph
 equation
  \begin{equation}\label{mingraph}
    \dv \frac{\nabla u}{\sqrt{1+\abs{\nabla u}^2}} = 0
  \end{equation}
on a Cartan-Hadamard manifold $M$ of dimension $n\ge2$. We recall that a Cartan-Hadamard
manifold is a simply connected complete Riemannian manifold with non-positive sectional 
curvature. Since the exponential map $\exp_o \colon T_oM \to M$ is a diffeomorphism for 
every point $o\in M$, it follows that $M$ is diffeomorphic to $\Rn$. One can define an 
asymptotic boundary $\pinf M$ of $M$ as the set of all equivalence classes of unit speed
geodesic rays on $M$. Then the compactification of $M$ is given by $\bM = M\cup \pinf M$
equipped with the cone topology. We also notice that $\bM$ is homeomorphic to the 
closed Euclidean unit ball; for details, see \cite{eberleinoneill}. 

The \emph{asymptotic Dirichlet problem} on $M$ for some
operator $\QQ$ is the following: Given a function $f \in C(\pinf M)$ does there exist a
(unique) function $u \in C(\bM)$ such that $\QQ[u]=0$ on $M$ and $u|\pinf M = f$?
We will consider this problem for the minimal graph operator (or the mean curvature operator)
appearing in \eqref{mingraph}. It is also worth noting that a function $u$ satisfies 
\eqref{mingraph} if and only if the graph $\{(x,u(x)) \colon x\in M\}$ is a minimal 
hypersurface in the product space $M\times\R$.
 
The asymptotic Dirichlet problem on Cartan-Hadamard manifolds has been solved for various
operators and under various assumptions on the manifold. The first result for this problem 
was due to Choi \cite{choi} when he solved the asymptotic Dirichlet problem for the Laplacian 
assuming that the sectional curvature has a negative upper bound $K_M \le -a^2 <0$, and that  
any two points at infinity can be separated by convex neighborhoods. Anderson \cite{anderson}
showed that such convex sets exist provided the sectional curvature of the manifold satisfies 
$-b^2\le K_M \le -a^2 <0$. We point out that Sullivan \cite{sullivan} solved independently the 
asymptotic Dirichlet problem for the Laplacian under the same curvature assumptions but
using probabilistic arguments. Cheng \cite{cheng} was the first to solve the problem for the 
Laplacian under the
same type of pointwise pinching assumption for the sectional curvatures as we consider in this paper.
Later the asymptotic Dirichlet problem has been generalized for $p$-harmonic and 
$\mathcal{A}$-harmonic functions under various curvature assumptions, see 
\cite{casterasholopainenripoll2}, \cite{holopainenplaplace}, \cite{holopainenvahakangas}, 
\cite{vahakangaspinch}, \cite{vahakangasyoung}.
 
 Concerning the mean curvature operator, there has been a growing interest in developing a 
 theory of constant (or prescribed) mean curvature hypersurfaces in Riemannian manifolds. 
 For instance, Guan and Spruck \cite{GS1} investigated the problem of finding complete 
 hypersurfaces of constant mean curvature with prescribed asymptotic boundaries at infinity 
 in the hyperbolic space (see also the recent \cite{GSX} and references therein). On 
 the other hand, Dajczer, Hinojosa, and de Lira (\cite{DHdL}, \cite{DdL1}, \cite{DdL2}) 
 have studied Killing graphs of prescribed mean curvature under curvature conditions on the 
 ambient space. Further studies include so-called half-space theorems in product spaces 
 $M\times\mathbb{R}_{+}$; see \cite{RSS}, \cite{DJX}, and references therein. In these 
 investigations, {\em a priori} gradient estimates based on the classical maximum principle 
 for elliptic equations are indispensable. To motivate further the study of the asymptotic 
 Dirichlet problem for the minimal graph equation, we recall the papers 
 \cite{collinrosenberg} and \cite{galvezrosenberg} by Collin, G{\'a}lvez, and Rosenberg
 who were able to construct harmonic diffeomorphisms from the complex plane $\mathbb{C}$ 
 onto the hyperbolic plane $\mathbb{H}^2$ and onto any Hadamard surface $M$ whose curvature 
 is bounded from above by a negative constant, respectively, hence disproving a conjecture 
 of Schoen and Yau \cite{schoenyau}. The key idea in their constructions was to solve the 
 Dirichlet problem on unbounded ideal polygons with boundary values $\pm\infty$ on the 
 sides of the ideal polygons.

 
Concerning the asymptotic Dirichlet problem for the equation \eqref{mingraph}, Casteras, 
Holopainen, and Ripoll studied the problem under curvature bounds
    $$
      -b\big(r(x)\big)^2 \le K(P) \le -a\big(r(x)\big)^2,
    $$
where $a,b \colon [0,\infty) \to [0,\infty)$ are smooth functions subject to some growth 
conditions. Here and throughout the paper $r(x)=d(x,o)$ stands for the distance to a fixed
point $o\in M$. As special cases of their main theorem 
\cite[Theorem 1.6]{casterasholopainenripoll1} we state here the following two solvability
results.
 \begin{theo}\cite[Theorem 1.5, Corollary 1.7]{casterasholopainenripoll1}
  Let $M$ be a Cartan-Hadamard manifold of dimension $n\ge2$. Suppose that 
      \begin{equation}
	-r(x)^{2(\phi-2)-\ve} \le K(P) \le -\frac{\phi(\phi-1)}{r(x)^2}
      \end{equation}
   or
      \begin{equation}\label{rttams}
	-r(x)^{-2-\ve}e^{2kr(x)} \le K(P) \le -k^2
      \end{equation}
   for some constants $\ve>0,\, \phi>1$, and $k>0$, and for all 2-dimensional subspaces 
   $P\subset T_xM$, with $x\in M\setminus B(o,R_0)$. Then the asymptotic Dirichlet problem
   for \eqref{mingraph} is uniquely solvable for any boundary data $f\in C(\pinf M)$.
 \end{theo}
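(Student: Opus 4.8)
The plan is the standard three–step strategy for the asymptotic Dirichlet problem: solve \eqref{mingraph} on an exhaustion of $M$ by geodesic balls, extract an entire solution via interior estimates, and then recover the prescribed boundary values at infinity by constructing barriers near each point of $\pinf M$. First I would extend $f$ to some $\tilde f\in C(\bM)$. Since $M$ is a Cartan--Hadamard manifold, $r$ is smooth on $M\setminus\{o\}$ and every geodesic sphere $\p B(o,k)$ is mean convex (its mean curvature with respect to the inner normal is strictly positive), equivalently $\p B(o,k)\times\R$ is mean convex in $M\times\R$. Hence the classical theory of quasilinear elliptic equations (Gilbarg--Trudinger, together with the Jenkins--Serrin type solvability for the minimal graph equation over mean convex domains) yields a solution $u_k\in C^{2,\al}\big(B(o,k)\big)\cap C\big(\co{B(o,k)}\big)$ of \eqref{mingraph} in $B(o,k)$ with $u_k=\tilde f$ on $\p B(o,k)$. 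The maximum principle gives the uniform height bound $\norm{u_k}_\infty\le\norm{\tilde f}_\infty$, mean convexity of the sphere provides boundary gradient estimates, and the interior gradient estimate for minimal graphs gives, for each fixed $R>0$, a bound for $\abs{\nabla u_k}$ on $B(o,R)$ independent of $k\ge 2R$.

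With uniform local $C^1$ bounds in hand, interior Schauder estimates give, along a subsequence, $u_k\to u$ in $C^2_{\loc}(M)$, where $u$ solves \eqref{mingraph} on all of $M$ and $\norm{u}_\infty\le\norm{f}_\infty$. It then remains to show that $u$ extends continuously to $\bM$ with $u|\pinf M=f$; once this is known, uniqueness follows from the comparison principle for \eqref{mingraph}: the difference of two entire solutions satisfies a locally uniformly elliptic linear equation without zeroth–order term, so the strong maximum principle forbids an interior extremum, and continuity up to $\pinf M$ with equal boundary values forces the two solutions to coincide.

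The core step is the barrier construction. Fix $x_0\in\pinf M$ and $\ve>0$. By the definition of the cone topology it suffices, in a suitable truncated cone neighbourhood $C=C(v,\delta)\setminus B(o,R_1)$ of $x_0$, to produce an upper barrier: a supersolution $\Psi$ of \eqref{mingraph} in $C$ with $\Psi\ge\tilde f$ on $\p C\cap M$ and $\limsup_{x\to x_0}\Psi(x)\le f(x_0)+\ve$. I would look for $\Psi$ of the form
\[
  \Psi(x)=f(x_0)+\ve+A\,\eta(x)+B\,h\big(r(x)\big),
\]
where $\eta\ge0$ is built from the angular (spherical) coordinate and vanishes along the ray representing $x_0$, $h\colon[R_1,\infty)\to(0,\infty)$ is decreasing with $h(r)\to0$, and $A,B>0$; continuity of $f$ at $x_0$ and $\norm{\tilde f}_\infty<\infty$ let one choose $A,B,R_1,\delta$ so that $\Psi\ge\tilde f$ on $\p C\cap M$. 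Writing the operator in non-divergence form,
\[
  \QQ[\Psi]\ \doteq\ \Delta\Psi-\frac{\operatorname{Hess}\Psi(\nabla\Psi,\nabla\Psi)}{1+\abs{\nabla\Psi}^2},
\]
one must pick $h$ so that $\abs{\nabla\Psi}$ stays bounded on $C$ and $\QQ[\Psi]\le0$ there. This is where the curvature hypotheses enter: the upper bound $K\le-\phi(\phi-1)/r^2$ gives, via the Hessian comparison theorem, $\operatorname{Hess}r\ge\frac{\phi-1}{r}\big(\ang{\cdot,\cdot}-dr\otimes dr\big)$ on $M\setminus B(o,R_0)$, which makes $\Delta h(r)$ suitably negative, while the lower bound $-r^{2(\phi-2)-\ve}\le K$ (respectively $-r^{-2-\ve}e^{2kr}\le K$ and $\operatorname{Hess}r\ge k\coth(kr)(\ang{\cdot,\cdot}-dr\otimes dr)$ in case \eqref{rttams}) controls $\abs{\nabla\eta}$ and $\operatorname{Hess}\eta$ from above so that the angular term does not overwhelm the radial one. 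Once $\Psi$ is such a barrier, the comparison principle applied on $C\cap B(o,k)$ gives $u_k\le\Psi$ for all large $k$, hence $u\le\Psi$ on $C$ and $\limsup_{x\to x_0}u(x)\le f(x_0)+\ve$; a symmetric lower barrier gives the matching bound, and letting $\ve\downarrow0$ yields $u(x)\to f(x_0)$.

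The main obstacle is precisely this barrier construction. Because $\QQ$ is quasilinear, $\Psi$ must be engineered to keep $\abs{\nabla\Psi}$ under control \emph{and} to keep $\Delta\Psi$ negative simultaneously, and it is the interplay between the decay rate admissible for $h$, the exponent $\phi$, and the two-sided curvature pinching that forces the precise growth restriction on the curvature lower bound. I expect the algebra of checking $\QQ[\Psi]\le0$ — balancing the terms coming from $\operatorname{Hess}h(r)$, $\operatorname{Hess}\eta$, and the cross terms $\operatorname{Hess}\Psi(\nabla\Psi,\nabla\Psi)$ against the Laplacian — to be the technically delicate part, with everything else (solvability on balls, passage to the limit, uniqueness) being routine.
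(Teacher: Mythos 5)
Your overall skeleton (solve \eqref{mingraph} on geodesic balls, get an entire solution by interior gradient plus Schauder estimates, prove uniqueness by comparison) is fine and is indeed the routine part. But the proposal has a genuine gap exactly where the theorem lives: the barrier construction is only announced, not carried out, and it is far from automatic under these curvature hypotheses. For the ansatz $\Psi=f(x_0)+\ve+A\eta+Bh(r)$ you must estimate not only $\abs{\nabla\eta}$ but the full Hessian of the angular function in the quasilinear operator, and the only tool available for that is the curvature \emph{lower} bound, which here is extremely weak: $K\ge -r^{2(\phi-2)-\ve}$ (polynomially exploding) or $K\ge -r^{-2-\ve}e^{2kr}$ (exponentially exploding). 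A Hessian comparison for the angle function in terms of such a $b(r)$ is itself a nontrivial lemma (in the literature it requires structural hypotheses on $b$, monotonicity and doubling-type conditions, and produces terms that must be beaten by the radial term coming from $K\le -\phi(\phi-1)/r^2$ resp. $K\le-k^2$); you also need to keep $\abs{\nabla\Psi}$ bounded so that the mean curvature operator is effectively uniformly elliptic along $\Psi$, and you need the balance of exponents to close precisely because of the $-\ve$ in the lower bound. None of this is checked, and "I expect the algebra to work" is precisely the content of the cited theorem; as written, the argument establishes only the (standard) reduction of the asymptotic Dirichlet problem to the existence of such barriers.

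It is also worth knowing that this is not how the result is proved in the source the paper cites, nor how the present paper proceeds under its pinching hypothesis. In \cite{casterasholopainenripoll1} (and in Section 3 here) the boundary value at infinity is recovered without barriers: one radially extends Lipschitz data $f$ to $\theta$, proves a Caccioppoli-type inequality for solutions of \eqref{mingraph}, uses complementary Young functions $F,G$ (and $F_1,G_1$) to show $\int_M\varphi(\abs{u-\theta}/\nu)<\infty$ — this is where the interplay between the curvature upper bound (decay of $\abs{\nabla\theta}\le 1/j$) and the lower bound (growth of the volume Jacobian) enters, via integrability rather than pointwise sign conditions — and then a Moser iteration turns the integral bound into $\abs{u-\theta}\to 0$ at $\pinf M$; general continuous data are handled by uniform approximation. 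For the case \eqref{rttams} the earlier barrier-type proof of Ripoll and Telichevesky goes through convex exhaustions (the strict convexity condition) rather than an explicit radial-plus-angular supersolution. So if you want to keep your route, the burden is to actually produce the supersolution and verify $\QQ[\Psi]\le 0$ under these specific two-sided bounds; otherwise the integral-estimate strategy is the one that is known to close.
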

The solvability of the asymptotic Dirichlet problem for \eqref{mingraph} under curvature 
assumptions \eqref{rttams}
was earlier obtained by Ripoll and Telichevesky in \cite{RT-tams}; see also \cite{E-SFR} 
and \cite{E-SR}. Recently, Casteras, Holopainen, and Ripoll \cite{casterasholopainenripoll2} 
were able to weaken the curvature upper bound to an almost optimal one.
  \begin{theo}\cite[Theorem 5]{casterasholopainenripoll2}
   Let $M$ be a Cartan-Hadamard manifold of dimension $n\ge3$ satisfying the curvature 
   assumption
      $$
	-\frac{\big(\log r(x)\big)^{2\bar{\ve}}}{r(x)^2} \le K(P) \le -\frac{1+\ve}{r(x)^2
	\log r(x)}
      $$
   for some constants $\ve>\bar{\ve}>0$ and for any 2-dimensional subspace $P\subset T_xM$,
   with $x\in M\setminus B(o,R_0)$. Then the asymptotic Dirichlet problem
   for \eqref{mingraph} is uniquely solvable for any boundary data $f\in C(\pinf M)$.
  \end{theo}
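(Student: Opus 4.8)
The plan is to run the standard Perron-exhaustion scheme, with the curvature hypotheses entering only in the construction of barriers near infinity. Write $\QQ[v]=\dv\bigl(\nabla v/\sqrt{1+\abs{\nabla v}^2}\bigr)$. First I would extend $f$ to some $F\in C(\bM)$ with $\inf_{\pinf M}f\le F\le\sup_{\pinf M}f$, and exhaust $M$ by geodesic balls $B_k=B(o,k)$. Since $M$ is a Cartan-Hadamard manifold, each $\partial B_k$ bounds a geodesically convex ball and so has positive mean curvature with respect to the inner normal; the classical solvability theory for the minimal graph equation on domains with mean-convex boundary then provides a unique $u_k\in C^\infty(B_k)\cap C(\co{B_k})$ with $\QQ[u_k]=0$ in $B_k$ and $u_k|\partial B_k=F|\partial B_k$. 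The comparison principle for $\QQ$ gives $\inf F\le u_k\le\sup F$, and the standard interior gradient estimate for \eqref{mingraph} bounds $\abs{\nabla u_k}$ on compact subsets of $M$ in terms of $\sup\abs{F}$ and the distance to $\partial B_k$; Schauder estimates then give uniform $C^{2,\al}_{\loc}$ bounds, so along a subsequence $u_k\to u$ in $C^2_{\loc}(M)$ with $\QQ[u]=0$ and $\inf f\le u\le\sup f$. Uniqueness of the eventual solution also follows from the comparison principle, so the theorem reduces to showing that $u$ extends continuously to $\pinf M$ with $u|\pinf M=f$.

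For this it would suffice to show: given $x_0\in\pinf M$ and $\ve>0$, there are $R>0$ and a supersolution $\psi$ of $\QQ$ on the neighbourhood of infinity $\Omega_R=\{x\in M:r(x)>R\}$ with $\psi\ge0$, with $\psi(x)\to0$ as $x\to x_0$, and with $\psi\ge\sup_{\pinf M}f-\inf_{\pinf M}f$ on $\{r=R\}$ and on the part of $\Omega_R$ lying outside a cone-topology neighbourhood $V$ of $x_0$ chosen so small that $F<f(x_0)+\ve$ on $V$ (the symmetric statement for $-f$ provides a subsolution). Indeed, comparing $u_k$ with the supersolution $f(x_0)+\ve+\psi$ on $\Omega_R\cap B_k$ and letting $k\to\infty$ gives $u\le f(x_0)+\ve+\psi$ on $\Omega_R$, hence $\limsup_{x\to x_0}u(x)\le f(x_0)+\ve$; together with the symmetric bound this yields $u\in C(\bM)$ and $u|\pinf M=f$.

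Such a $\psi$ I would seek in the form $\psi=A'\,h(r)+A\,\eta(\theta)$, where $\theta(x)=\an_o\bigl(\dot\gamma_x(0),v_0\bigr)$ is the angle at $o$ between the geodesic to $x$ and the ray $\gamma_{x_0}$ with initial velocity $v_0$, $h(r)=(\log r)^{-\al}$ with a small exponent $0<\al<\ve-\bar\ve$, and $\eta\colon[0,\pi]\to[0,1]$ is smooth and nondecreasing with $\eta\equiv0$ on $[0,\delta/2]$ and $\eta\equiv1$ on $[\delta,\pi]$ for a small $\delta>0$. Since $\eta(\theta)$ is locally constant near the rays $\gamma_{\pm x_0}$, $\psi$ is smooth on $\Omega_R$; as $x\to x_0$ one has $r(x)\to\infty$, $\theta(x)\to0$, so $\psi(x)\to0$, while $\psi\ge A'h(r)+A\ge A$ wherever $\theta\ge\delta$. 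One would first fix $\delta$ small enough that the truncated cone $\{\theta<\delta\}\cap\{r>R\}$ lies in a neighbourhood $V$ as above, then $A=\sup f-\inf f$, then $R$ large (so that the estimate below holds on $\{r>R\}$), and finally $A'$ so large that $A'h(R)\ge A$; this makes $\psi$ the required barrier.

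The real content, and where I expect the main obstacle, is the verification that $\QQ[\psi]\le0$ on $\{r>R\}$, i.e.\ that $\Delta\psi\le\nabla^2\psi(\nabla\psi,\nabla\psi)/(1+\abs{\nabla\psi}^2)$; here the essentially sharp curvature hypotheses must be exploited rather tightly, through two-sided Hessian comparison. Let $\sigma$ solve $\sigma''=\bigl((1+\ve)/(r^2\log r)\bigr)\sigma$ with $\sigma(0)=0$, $\sigma'(0)=1$. The upper bound $K(P)\le-(1+\ve)/(r^2\log r)$ forces $\sigma(r)\sim r(\log r)^{(1+\ve)/2}$, so $\abs{\nabla\theta}\lesssim1/\sigma(r)$ decays and the second fundamental form of $\partial B_r$ is $\ge(\sigma'/\sigma)(g-dr\otimes dr)$; the lower bound $K(P)\ge-(\log r)^{2\bar\ve}/r^2$ bounds that second fundamental form from above and, via the Gauss equation, the intrinsic curvature of $\partial B_r$ by $O\bigl((\log r)^{2\bar\ve}/r^2\bigr)$, so Laplacian comparison on $\partial B_r$ gives $\Delta\theta\lesssim(n-2)(\log r)^{\bar\ve}/\sigma(r)^2$ on the support of $\eta'$. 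Because $\abs{\nabla\psi}\to0$ along the cone, the right-hand side above is a negligible error, and it suffices that $\Delta\psi=A'(h''+h'\Delta r)+A\bigl(\eta''\abs{\nabla\theta}^2+\eta'\Delta\theta\bigr)$ be negative for $r\ge R$. The favourable term is $h'\Delta r$: since $\Delta r\ge(n-1)/r$ on a Cartan-Hadamard manifold and $h'<0$, one gets $h''+h'\Delta r\le-\bigl(n-2-o(1)\bigr)\al(\log r)^{-\al-1}/r^2$, strictly negative precisely because $n\ge3$; meanwhile the angular terms are $\lesssim A(\log r)^{\bar\ve}/\sigma(r)^2\sim A(\log r)^{-1-\ve+\bar\ve}/r^2$, so the radial term dominates exactly because $\al<\ve-\bar\ve$. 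Making these leading orders cancel with room to spare --- through the choice of $\al$, the profile $\eta$, and then $R,A,A'$ --- is the delicate heart of the argument; everything else (the comparison on $\Omega_R\cap B_k$, the height and gradient estimates, uniqueness) is routine.
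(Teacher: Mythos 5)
This theorem is quoted in the paper from \cite{casterasholopainenripoll2} without proof, so there is no in-paper argument to compare against; your barrier construction is, however, essentially the method of that cited source (and of Holopainen--V\"ah\"akangas before it), and the exponent bookkeeping is right: with $h(r)=(\log r)^{-\al}$ one gets $h''+h'\Delta r\le-\al\bigl(n-2-o(1)\bigr)(\log r)^{-\al-1}/r^2$ from the crude bound $\Delta r\ge(n-1)/r$ (this is exactly where $n\ge3$ enters), while the angular error is of order $(\log r)^{\bar\ve-\ve-1}/r^2$, so $0<\al<\ve-\bar\ve$ is precisely the right condition, and the choices of $\delta,A,R,A'$ can be made in that order without circularity. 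Two points deserve flagging. First, a computational slip: the solution of $\sigma''=\frac{1+\ve}{r^2\log r}\sigma$ grows like $r(\log r)^{1+\ve}$, not $r(\log r)^{(1+\ve)/2}$; this is harmless (it only makes $\abs{\nabla\theta}$ smaller), but your final rate $(\log r)^{\bar\ve-\ve-1}/r^2$ for the angular term is the one coming from the standard estimate $\abs{\Delta\theta}\lesssim f_b'/(f_b f_a)$ rather than from $(\log r)^{\bar\ve}/\sigma^2$ as written. Second, the estimates for $\Delta\theta$ and for $\operatorname{Hess}\theta$ (the latter is needed to control $\nabla^2\psi(\nabla\psi,\nabla\psi)$, not just $\Delta\psi$) are the genuine technical content: they require the two-sided curvature bound on \emph{all} $2$-planes and a Jacobi-field/Hessian comparison argument in the spirit of \cite[Lemma 2]{vahakangaspinch}, and you assert them rather than derive them. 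It is worth noting that this is exactly why the present paper cannot reuse your route for its own main theorem: under the pointwise pinching hypothesis \eqref{pinchassump} there is no lower curvature bound, hence no usable $\Delta\theta$ estimate, and the authors instead run an integral argument (Caccioppoli inequality, Young functions, Moser iteration) that avoids pointwise barriers altogether.
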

It is worth noting that even a strict negative curvature upper bound alone is not sufficient 
in dimensions $n\ge 3$ for the solvability of the asymptotic Dirichlet problem for 
\eqref{mingraph}. Indeed, in \cite{HR_ns} Holopainen and Ripoll generalized Borb\'ely's 
counterexample \cite{Bor} to cover the minimal graph equation.
 
Our main theorem is the following. It is worth noticing that no lower bounds for sectional 
curvatures are needed. Instead we assume a pointwise pinching condition on sectional curvatures.
 \begin{theo}\label{maintheorem}
  Let $M$ be a Cartan-Hadamard manifold of dimension $n\ge 2$ and let $\phi>1$. Assume that 
      \begin{equation}\label{curvassump}
       K(P) \le - \frac{\phi(\phi-1)}{r(x)^2},
      \end{equation}
where $K(P)$ is the sectional curvature of any two-dimensional subspace $P\subset T_xM$
containing the radial vector $\nabla r(x)$, with $x\in M\setminus B(o,R_0)$. Suppose also
that there exists a constant $C_K<\infty$ such that 
      \begin{equation}\label{pinchassump}
       \abs{K(P)} \le C_K\abs{K(P')}
      \end{equation}
whenever $x\in M\setminus B(o,R_0)$ and $P,P'\subset T_xM$ are two-dimensional subspaces 
containing the radial vector $\nabla r(x)$. Moreover, suppose that the dimension $n$ and
the constant $\phi$ satisfy the relation
      \begin{equation}\label{dimrestriction}
       n> \frac{4}{\phi} +1.
      \end{equation}
Then the asymptotic Dirichlet problem for the minimal graph equation \eqref{mingraph} is
uniquely solvable for any boundary data $f\in C(\pinf M)$.
 \end{theo}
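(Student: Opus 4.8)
The plan is to combine the Perron method with a barrier construction at infinity, which is the standard scheme for the asymptotic Dirichlet problem; the substance lies in building barriers adapted to the divergence-form operator $\QQ$ in \eqref{mingraph}. First I would extend $f$ to a function in $C(\bM)$, still denoted $f$, and for each $k\in\N$ solve the Dirichlet problem $\QQ[u_k]=0$ in $B(o,k)$, $u_k|\partial B(o,k)=f$; this is solvable for continuous boundary data because geodesic balls of a Cartan--Hadamard manifold are convex, and the comparison principle gives $\sup_M\abs{u_k}\le\sup_{\bM}\abs{f}$. Interior gradient estimates for minimal graphs depend only on this height bound and on the geometry of $M$ on compact subsets, hence are uniform in $k$; together with Schauder estimates and a diagonal argument they produce a subsequence converging in $C^\infty_{\loc}(M)$ to a solution $u$ of \eqref{mingraph} with $\sup_M\abs{u}\le\sup\abs{f}$. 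It then remains to show that $u$ extends to a function in $C(\bM)$ with $u|\pinf M=f$; uniqueness follows from the comparison principle by exhausting $M$ with balls, since any solution in $C(\bM)$ with boundary values $f$ lies within $\ve$ of the continuous extension of $f$ outside a sufficiently large ball.

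The heart of the argument is the following: for every $x_0\in\pinf M$ and $\ve>0$, construct an upper barrier $w^+\in C\big(\bM\setminus B(o,R_0)\big)$ with $\QQ[w^+]\le0$ in $M\setminus B(o,R_0)$, with $w^+\ge u_k$ on $\partial B(o,R_0)\cup\partial B(o,k)$ for all large $k$, and with $\limsup_{x\to x_0}w^+(x)\le f(x_0)+\ve$, together with a symmetric lower barrier $w^-$; the comparison principle on the annuli $B(o,k)\setminus B(o,R_0)$ then gives $w^-\le u_k\le w^+$, hence $w^-\le u\le w^+$ on $M\setminus B(o,R_0)$, and letting $\ve\to0$ yields the boundary values. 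I would look for $w^+$ of the form
\[
  w^+=f(x_0)+\ve+C_1\,h\circ r+C_2\,\eta ,
\]
where $C_1,C_2>0$ are constants fixed at the end, $h\colon[R_0,\infty)\to(0,\infty)$ is decreasing with $\lim_{r\to\infty}h(r)=0$ and solves an ODE inequality tuned to the radial part of $\QQ$, and $\eta$ is a $C^2$ regularisation of the angular function $x\mapsto\an_o(x_0,x)$, normalised so that $\eta(x_0)=0$, $\eta$ is bounded below by a positive constant outside a fixed neighbourhood of $x_0$, and $\eta$ has quantitatively controlled gradient and Hessian. The boundary behaviour of $w^+$ is then immediate from $h(\infty)=0$ and $\eta(x)\to0$ as $x\to x_0$, while the lower bound on $\eta$ away from $x_0$ and the largeness of $C_1h(R_0)$ give the comparison inequalities on the two spheres; the real content is the differential inequality.

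Verifying $\QQ[w^+]\le0$ is where all three hypotheses enter and is the main obstacle. Writing
\[
  \QQ[w]=\frac{1}{\sqrt{1+\abs{\nabla w}^2}}\Big(\Delta w-\frac{\operatorname{Hess}w(\nabla w,\nabla w)}{1+\abs{\nabla w}^2}\Big),
\]
the curvature upper bound \eqref{curvassump} and the Hessian comparison theorem give, for $r$ large, $\Delta r\ge(n-1)\phi/r$ and $\operatorname{Hess}r\ge(\phi/r)g|_{\ker dr}$, so that $C_1h'\Delta r$ is a strongly negative term that must absorb the positive contributions of $C_1h''$, of $C_2\Delta\eta$ and $C_2\operatorname{Hess}\eta$, and of the fully nonlinear correction. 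Requiring the resulting radial ODE inequality to admit a bounded solution decaying to zero is exactly what forces the relation \eqref{dimrestriction} between $n$ and $\phi$. The pinching condition \eqref{pinchassump} enters in the construction of $\eta$: since no lower bound on $K$ is assumed, one cannot control $\operatorname{Hess}r$ and the spreading of geodesics by comparison with a fixed rotationally symmetric model. Instead, \eqref{pinchassump} together with \eqref{curvassump} traps the radial sectional curvatures at each point $x\in M\setminus B(o,R_0)$ between $-C_K\kappa(x)$ and $-\kappa(x)$, where $\kappa(x):=\inf\{\abs{K(P)}:P\subset T_xM,\ \nabla r(x)\in P\}\gtrsim r(x)^{-2}$, and this near-radial-symmetry is what provides the two-sided estimates on $\eta$, $\nabla\eta$ and $\operatorname{Hess}\eta$ used above---the analogue, for $\QQ$, of Cheng's use of pinching in the harmonic case---at the cost of the dimension restriction \eqref{dimrestriction}. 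Once the differential inequality is established the remainder is bookkeeping: choose $C_2$ so that $\eta$ separates $x_0$ from the rest of $\pinf M$, then $C_1$ so that $w^\pm$ dominate, respectively are dominated by, $u_k$ on the two spheres, and finally assemble the local continuity at every $x_0\in\pinf M$ into $u\in C(\bM)$.
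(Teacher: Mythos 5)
Your overall scheme (exhaustion by balls, uniform height and interior gradient estimates, convergence in $C^2_{\loc}$, uniqueness by comparison) matches the paper's first steps, but the heart of your argument --- the barrier $w^+=f(x_0)+\ve+C_1\,h\circ r+C_2\,\eta$ with $\QQ[w^+]\le 0$ --- has a genuine gap precisely where the hypotheses of this theorem differ from the classical two-sided curvature setting. To verify $\QQ[w^+]\le 0$ you need pointwise control of $\Delta\eta$ and $\operatorname{Hess}\eta$ for the (regularised) angular function, and you assert that the pinching condition \eqref{pinchassump} supplies this ``near radial symmetry''. It does not: no lower curvature bound in terms of $r$ is assumed, so the sectional curvatures, and with them $\operatorname{Hess} r$ and the second fundamental forms of geodesic spheres, may blow up arbitrarily fast along some directions. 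The pinching condition only compares curvatures of radial planes \emph{at the same point}, and via the Jacobi field comparison (Lemma \ref{vahakpinclemma3}) it yields exactly two pieces of information: the first-order bound $\abs{\nabla\theta}\le 1/j$ (Lemma \ref{anglegrad}) and the Jacobian/volume bound $\lambda(r,v)\le J^{n-1}\lesssim j^{C_K(n-1)}$. It gives no pointwise second-order control of the angle function, and this is precisely the known obstruction that forces Cheng, V\"ah\"akangas, and this paper to abandon barriers in favour of integral estimates: a Caccioppoli-type inequality (Lemma \ref{caccioppoli}) tested with Young-function compositions $\varphi(\abs{u-\theta}/\nu)$, the integral bound of Lemma \ref{intestimphi}, a Moser iteration (Lemma \ref{moserite}) to convert the integral smallness into pointwise decay of $\abs{u-\theta}$ at infinity, and Lemma \ref{fsmall} to make the error integrals finite using only the two facts above.

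A second, related problem is your account of the restriction \eqref{dimrestriction}. In the paper it arises from a completely explicit competition: Green's formula and $r\Delta r\ge (n-1)\phi/(1+\ve)$ produce the coefficient $C_0=(n-1)\phi/(1+\ve)$, which must beat the constant $4+\ve$ coming from the nonlinear Caccioppoli inequality (the term $(4+\ve)\nu^2\int \tfrac{\varphi^2}{\varphi'}(h)\abs{\nabla\eta}^2$, applied with $\eta=\sqrt{r}$), giving $(n-1)\phi>4$. Your sketch attributes it to the existence of a bounded decaying solution of a radial ODE inequality; for an inequality of the form $h''+\tfrac{(n-1)\phi}{r}\,h'\le -(\text{errors})$ a decaying power $h(r)=r^{-\delta}$ works under much weaker conditions, so the specific threshold $n>4/\phi+1$ would not emerge from your construction as described. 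As it stands the proposal neither constructs the barrier under the stated hypotheses nor derives the stated dimension--curvature relation, so the main boundary-continuity step is unproved.
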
 
We notice that if we choose the constant $\phi$ in the curvature assumption to be bigger 
than $4$, then our theorem will hold in every dimension $n\ge2$. Similarly, if we let the 
dimension $n$ to be at least $5$, we can take the constant $\phi$ to be as close to $1$ as
we wish.

 In this paper we will proceed as follows. Section 2 is devoted to preliminaries. We will
recall some facts about Cartan-Hadamard manifolds, Jacobi equations, the minimal
graph equation and Young functions. In Section 3 we will prove our main theorem i.e. the 
solvability of the minimal graph equation under the curvature assumptions \eqref{curvassump},
\eqref{pinchassump} and \eqref{dimrestriction}. We will adopt
the strategies used in \cite{casterasholopainenripoll2}, \cite{cheng}, \cite{vahakangaspinch}
and \cite{vahakangasyoung}.

 \section{Preliminaries}
 
 \subsection{Cartan-Hadamard manifolds}
 Recall that a Cartan-Hadamard manifold is a complete and simply connected Riemannian
 manifold with non-positive sectional curvature. Let $M$ be a Cartan-Hadamard manifold and
 $\inb M$ the sphere at infinity, then we denote $\bM=M\cup\inb M$. The sphere at infinity
 is defined as the set of all equivalence classes of unit speed geodesic rays in $M$; two
 such rays $\gamma_1$ and $\gamma_2$ are equivalent if
    $$
      \sup_{t\ge0} d\big(\gamma_1(t),\gamma_2(t)\big) < \infty.
    $$
 The equivalence class of $\gamma$ is denoted by $\gamma(\infty)$. For each $x\in M$ and
 $y\in \bM\setminus\{x\}$ there exists a unique unit speed geodesic $\gamma^{x,y}\colon
 \R \to M$ such that $\gamma^{x,y}(0)=x$ and $\gamma^{x,y}(t)=y$ for some $t\in(0,\infty]$.
 For $x\in M$ and $y,z\in \bM\setminus\{x\}$ we denote by
    $$
      \an_x(y,z)=\an(\dot{\gamma}_0^{x,y},\dot{\gamma}_0^{x,z})
    $$
 the angle between vectors $\dot{\gamma}_0^{x,y}$ and $\dot{\gamma}_0^{x,z}$ in $T_xM$.
 If $v\in T_xM\setminus\{0\}$, $\alpha>0$, and $R>0$, we define a cone
    $$
      C(v,\alpha) = \{y\in \bM\setminus\{x\} \colon \an(v,\dot{\gamma}_0^{x,y})<\alpha\}
    $$
 and a truncated cone
    $$
      T(v,\alpha,R) = C(v,\alpha)\setminus \bar{B}(x,R).
    $$
 All cones and open balls in $M$ form a basis for the cone topology in $\bM$. With this
 topology $\bM$ is homeomorphic to the closed unit ball $\bar{B}^n\subset \Rn$ and
 $\inb M$ to the unit sphere $\Ss^{n-1} = \partial B^n$. For detailed study on the cone topology,
 see \cite{eberleinoneill}.

 Let us recall that the local Sobolev inequality holds on any Cartan-Hadamard manifold $M$.
 More precisely, there exist constants $r_S>0$ and $C_S<\infty$ such that
   \begin{equation}\label{sobieq}
    \left(\int_B \abs{\eta}^{n/(n-1)}\right)^{(n-1)/n} \le C_S \int_B \abs{\nabla \eta}
   \end{equation}
holds for every ball $B=B(x,r_S) \subset M$ and every function $\eta \in C_0^{\infty}(B)$.
This inequality can be obtained e.g. from Croke's estimate of the isoperimetric constant,
see \cite{cheegergromovtaylor} and \cite{croke}.
 
 \subsection{Jacobi equation}
 If $k\colon [0,\infty) \to (-\infty,0]$ is a smooth function, we denote by $f_k\in C^{\infty}
 \big([0,\infty)\big)$ the solution to the initial value problem
    \begin{equation}\label{jacobieq}
      \left\{ 
	\begin{aligned}
	  f_k'' + kf_k &= 0 \\
	  f_k(0) &= 0, \\
	  f_k'(0) &= 1.
	\end{aligned}\right.
    \end{equation}
 The solution is a non-negative smooth function.   
 
 In later sections we will need some known results related to Jacobi fields and curvature bounds. 
 The proofs of the following three lemmas are based on the Rauch comparison
 theorem (see e.g. \cite{greenewu}) and can be found in \cite{vahakangaspinch}.
 Concerning the curvature bounds, we have the following estimates for the growth of Jacobi fields
 and the Laplacian of the distance function:
 
 \begin{lem}\cite[Lemma 1]{vahakangaspinch}\label{vahakpinclemma1}
    Let $k,K\colon [0,\infty) \to (-\infty,0]$ be smooth functions that are constant in some
    neighborhood of $0$. Suppose that $v\in T_oM$ is a unit vector and $\gamma = \gamma^v
    \colon \R\to M$ is the unit speed geodesic with $\dot{\gamma}_0=v$. Suppose that for every
    $t>0$ we have
      $$
	k(t) \le K_M(P)\le K(t)
      $$
    for every two-dimensional subspace $P\subset T_{\gamma(t)}M$ that contains the radial vector
    $\dot{\gamma}_t$.
      \begin{enumerate}
       \item If $W$ is a Jacobi field along $\gamma$ with $W_0=0,$ $\abs{W_0'}=1$,
	     and $W_0'\bot v$, then
	     $$
	      f_K(t) \le \abs{W(t)} \le f_k(t)
	     $$
	     for every $t\ge0$.
      \item For every $t>0$ we have
	      $$
		(n-1)\frac{f_K'(t)}{f_K(t)} \le \Delta r\big(\gamma(t)\big) \le (n-1)
		\frac{f_k'(t)}{f_k(t)}.
	      $$
       \end{enumerate}

 \end{lem}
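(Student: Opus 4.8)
The plan is to deduce both parts from a single comparison for the second fundamental form of geodesic spheres, i.e. the matrix (Riccati) form of the Rauch comparison theorem of \cite{greenewu}. Write $S(t)$ for the shape operator of $\partial B(o,t)$ at $\gamma(t)$, that is, the restriction of $\operatorname{Hess} r|_{\gamma(t)}$ to $\dot{\gamma}_t^{\perp}$. Since the radial sectional curvatures along $\gamma$ are nonpositive, $\gamma$ has no conjugate points, so $S(t)$ is a well-defined symmetric endomorphism of $\dot{\gamma}_t^{\perp}$ for every $t>0$, it solves the Riccati equation $S'+S^{2}+\mathcal R(t)=0$ with $\mathcal R(t)v:=R(v,\dot{\gamma}_t)\dot{\gamma}_t$, and $S(t)=t^{-1}I+O(t)$ as $t\to0^{+}$. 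The key point I would use is that for a unit $v\perp\dot{\gamma}_t$ one has $\langle\mathcal R(t)v,v\rangle=K_M\big(\operatorname{span}(v,\dot{\gamma}_t)\big)$, the sectional curvature of a $2$-plane containing the radial vector; hence the hypothesis $k(t)\le K_M(P)\le K(t)$ for radial $P$ says exactly that $k(t)I\le\mathcal R(t)\le K(t)I$ as symmetric endomorphisms. For the comparison objects I would take the rotationally symmetric model with radial curvature $\kappa\le0$, in which the analogous shape operator is the scalar $(f_\kappa'/f_\kappa)(t)\,I$; it solves the same Riccati equation with $\mathcal R$ replaced by $\kappa(t)I$, has the same $t^{-1}I$ singularity at $0$, and, because $f_\kappa>0$ on $(0,\infty)$, no conjugate points.

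With this in place, part (2) follows by applying the matrix Riccati comparison theorem twice: $\mathcal R(t)\le K(t)I$ together with the common singular behaviour at $0$ gives $S(t)\ge(f_K'/f_K)(t)\,I$, and $\mathcal R(t)\ge k(t)I$ gives $S(t)\le(f_k'/f_k)(t)\,I$; taking traces and recalling $\Delta r(\gamma(t))=\operatorname{tr}S(t)$ yields the asserted inequalities. For part (1) I would first note that $\langle W,\dot{\gamma}\rangle$ is affine in $t$ and vanishes with its first derivative at $0$, so $W(t)\perp\dot{\gamma}_t$ for all $t$, and that $W$, being a Jacobi field vanishing at $o$, is the variation field of a geodesic variation emanating from $o$, whence $W'(t)=S(t)W(t)$ for $t>0$. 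Then
$$
\bigl(\log\abs{W(t)}\bigr)'=\frac{\langle W'(t),W(t)\rangle}{\abs{W(t)}^{2}}=\frac{\langle S(t)W(t),W(t)\rangle}{\abs{W(t)}^{2}}\in\Bigl[\,(\log f_K)'(t),\ (\log f_k)'(t)\,\Bigr],
$$
the inclusion being the Rayleigh-quotient consequence of the operator bounds on $S(t)$ just obtained. Integrating over $[\varepsilon,t]$, letting $\varepsilon\to0^{+}$, and using $\abs{W(\varepsilon)}=\varepsilon\abs{W'(0)}+o(\varepsilon)=\varepsilon+o(\varepsilon)$ and $f_\kappa(\varepsilon)=\varepsilon+o(\varepsilon)$, I would conclude $\log\bigl(\abs{W(t)}/f_K(t)\bigr)\ge0\ge\log\bigl(\abs{W(t)}/f_k(t)\bigr)$, that is $f_K(t)\le\abs{W(t)}\le f_k(t)$; the value $t=0$ is trivial.

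The part I expect to require the most care is the behaviour of the comparison at the singular endpoint $t=0$, where both $S(t)$ and the model shape operator blow up, so one cannot launch the comparison by prescribing data there. The standard way around this is to compare on $[\varepsilon,t]$ with the finite data $S(\varepsilon)$ against $(f_K'/f_K)(\varepsilon)I$ and then let $\varepsilon\to0$; this forces one to check that these endomorphisms are correctly ordered at $t=\varepsilon$, which I would get by matching their second-order expansions at $0$, using that the curvature hypothesis extends by continuity to a neighbourhood of $o$. (Alternatively one can use the index-form version of Rauch, in which the vanishing at $0$ is built in.) A secondary point to keep in mind is that the textbook Rauch theorem is stated with curvature bounds for all $2$-planes, whereas here they are available only for planes through $\dot{\gamma}_t$; since only such planes occur in $\mathcal R(t)$, the hypothesis as stated is precisely what the argument needs and nothing more.
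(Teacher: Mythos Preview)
The paper does not actually prove this lemma: it cites \cite[Lemma~1]{vahakangaspinch} and remarks only that the proofs of Lemmas~\ref{vahakpinclemma1}--\ref{anglegrad} ``are based on the Rauch comparison theorem (see e.g.~\cite{greenewu}) and can be found in \cite{vahakangaspinch}.'' So there is nothing to compare against beyond that one-line pointer.

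Your argument is correct and is precisely the Riccati/shape-operator formulation of the Rauch comparison alluded to. The identification of $\langle\mathcal R(t)v,v\rangle$ with the radial sectional curvature, the operator inequalities $(f_K'/f_K)I\le S(t)\le (f_k'/f_k)I$, the trace giving $\Delta r$, and the relation $W'=SW$ for Jacobi fields vanishing at $o$ are all standard and sound. Your handling of the singular endpoint $t=0$---comparing on $[\varepsilon,t]$ and matching the $t^{-1}I$ asymptotics, or alternatively invoking the index-form version---is exactly the right care to take, and your observation that only planes containing $\dot\gamma_t$ enter $\mathcal R(t)$ is the reason the radial hypothesis suffices. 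In short, your proposal fills in what the paper leaves to the reference, by the intended method.
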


 The pinching condition for the sectional curvatures gives a relation between the
 maximal and minimal moduli of Jacobi fields along a given geodesic that contains the radial
 vector:
 \begin{lem}\cite[Lemma 3.2]{cheng}\cite[Lemma 3]{vahakangaspinch}\label{vahakpinclemma3}
    Let $v\in T_oM$ be a unit vector and $\gamma=\gamma^v$. Suppose that $r_0>0$ and $k<0$
    are constants such that $K_M(P)\ge k$ for every two-dimensional subspace $P\subset T_xM$,
    $x\in B(o,r_0)$. Suppose that there exists a constant $C_K<\infty$ such that
      $$
	\abs{K_M(P)}\le C_K\abs{K_M(P')}
      $$
    whenever $t\ge r_0$ and $P,P'\subset T_{\gamma(t)}M$ are two-dimensional subspaces containing
    the radial vector $\dot{\gamma}_t$. Let $V$ and $\bar{V}$ be two Jacobi fields along $\gamma$
    such that $V_0=0=\bar{V}_0$, $V_0'\bot \dot{\gamma}_0 \bot \bar{V}_0$,  and $\abs{V_0'}=1=
    \abs{\bar{V}_0'}$. Then there exists a constant $c_0=c_0(C_K,r_0,k)>0$ such that
      $$
	\abs{V_r}^{C_K} \ge c_0\abs{\bar{V}_r}
      $$
    for every $r\ge r_0$.
 \end{lem}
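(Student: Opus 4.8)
The plan is to reduce the claimed estimate to a one–dimensional comparison of solutions of Jacobi equations by means of the Rauch–type Lemma~\ref{vahakpinclemma1}, and then to run a Wronskian (Sturm–type) argument. \emph{Step 1 (reduction to a scalar estimate).} Along $\gamma$ I introduce two comparison curvature functions $\kappa_1\le\kappa_2\le 0$: on $[0,r_0]$ set $\kappa_1\equiv k$ and $\kappa_2\equiv 0$, and for $t\ge r_0$ let $\kappa_1(t)$, respectively $\kappa_2(t)$, be the infimum, respectively the supremum, of $K_M(P)$ over all two–dimensional subspaces $P\subset T_{\gamma(t)}M$ containing $\dot\gamma_t$. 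After a routine smoothing (which preserves $\kappa_2\le 0$, $\kappa_1\le\kappa_2$ and the inequalities below, and makes $\kappa_1,\kappa_2$ smooth and constant near $0$; we omit it), the hypothesis $K_M\ge k$ on $B(o,r_0)$ together with $K_M\le 0$ gives $\kappa_1(t)\le K_M(P)\le\kappa_2(t)$ for every radial plane $P$ at $\gamma(t)$ and every $t\ge 0$, while the pinching hypothesis \eqref{pinchassump} gives $\abs{\kappa_1(t)}\le C_K\abs{\kappa_2(t)}$ for all $t\ge r_0$. Applying Lemma~\ref{vahakpinclemma1}(1) to $V$ and to $\bar V$ yields $f_{\kappa_2}(t)\le\abs{V_t}$ and $\abs{\bar V_t}\le f_{\kappa_1}(t)$ for every $t\ge 0$; since $s\mapsto s^{C_K}$ is increasing on $(0,\infty)$, it suffices to find $c_0=c_0(C_K,r_0,k)>0$ with $f_{\kappa_2}(r)^{C_K}\ge c_0 f_{\kappa_1}(r)$ for all $r\ge r_0$.

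\emph{Step 2 (a supersolution and the Wronskian).} Put $F=f_{\kappa_2}^{C_K}$. Since $f_{\kappa_2}''=\abs{\kappa_2}f_{\kappa_2}$, the function $f_{\kappa_2}$ is convex with $f_{\kappa_2}(0)=0$ and $f_{\kappa_2}'(0)=1$, so $f_{\kappa_2}(t)\ge t$ and $f_{\kappa_2}'(t)/f_{\kappa_2}(t)\ge 1/t$ for $t>0$. A direct computation gives, for $t\ge r_0$,
\begin{equation*}
 F''=\Bigl(C_K(C_K-1)\tfrac{(f_{\kappa_2}')^2}{f_{\kappa_2}^2}+C_K\abs{\kappa_2}\Bigr)F\ge\Bigl(\tfrac{C_K(C_K-1)}{t^2}+\abs{\kappa_1}\Bigr)F\ge\abs{\kappa_1}F,
\end{equation*}
where $C_K\abs{\kappa_2}\ge\abs{\kappa_1}$ was used. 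Hence on $[r_0,\infty)$ the positive functions $F$ and $f_{\kappa_1}$ satisfy $F''\ge\abs{\kappa_1}F$ and $f_{\kappa_1}''=\abs{\kappa_1}f_{\kappa_1}$, so their Wronskian $W:=F'f_{\kappa_1}-Ff_{\kappa_1}'$ has $W'=F''f_{\kappa_1}-Ff_{\kappa_1}''\ge\frac{C_K(C_K-1)}{t^2}Ff_{\kappa_1}\ge 0$; thus $W$ is non-decreasing on $[r_0,\infty)$ and $(F/f_{\kappa_1})'=W/f_{\kappa_1}^2$.

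\emph{Step 3 (conclusion).} One computes $f_{\kappa_2}(r_0)=r_0$, $f_{\kappa_2}'(r_0)=1$, $f_{\kappa_1}(r_0)=\sinh(\sqrt{\abs k}\,r_0)/\sqrt{\abs k}$ and $f_{\kappa_1}'(r_0)=\cosh(\sqrt{\abs k}\,r_0)$ (with the obvious limits when $k=0$), whence $W(r_0)=r_0^{C_K-1}\bigl(C_Kf_{\kappa_1}(r_0)-r_0f_{\kappa_1}'(r_0)\bigr)$. If $W(r_0)\ge 0$, then $W\ge 0$ on $[r_0,\infty)$, so $F/f_{\kappa_1}$ is non-decreasing and $F(r)/f_{\kappa_1}(r)\ge r_0^{C_K}/f_{\kappa_1}(r_0)$ for $r\ge r_0$. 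If $W(r_0)<0$, then either $C_K=1$ and $W$ is constant, or $C_K>1$ and $W'\ge C_K(C_K-1)t^{C_K-1}$ (using $F\ge t^{C_K}$ and $f_{\kappa_1}\ge t$), so $W$ is unbounded above and has a first zero $t_1\in(r_0,\infty]$ (with $t_1=\infty$ in the constant case). On $[r_0,t_1)$ we have $0<-W\le -W(r_0)$, and convexity of $f_{\kappa_1}$ gives $f_{\kappa_1}(t)\ge f_{\kappa_1}(r_0)+f_{\kappa_1}'(r_0)(t-r_0)$, so $\int_{r_0}^{t_1}f_{\kappa_1}(t)^{-2}\,dt\le\bigl(f_{\kappa_1}(r_0)f_{\kappa_1}'(r_0)\bigr)^{-1}$; hence the total decrease of $F/f_{\kappa_1}$ over $[r_0,t_1)$ is at most $-W(r_0)/\bigl(f_{\kappa_1}(r_0)f_{\kappa_1}'(r_0)\bigr)$, and therefore, for $r\ge r_0$,
\begin{equation*}
 \frac{F(r)}{f_{\kappa_1}(r)}\ge\frac{r_0^{C_K}}{f_{\kappa_1}(r_0)}-\frac{-W(r_0)}{f_{\kappa_1}(r_0)f_{\kappa_1}'(r_0)}=\frac{C_K r_0^{C_K-1}}{f_{\kappa_1}'(r_0)}=\frac{C_K r_0^{C_K-1}}{\cosh(\sqrt{\abs k}\,r_0)}>0,
\end{equation*}
where we also used that $F/f_{\kappa_1}$ is non-increasing on $[r_0,t_1]$ and non-decreasing on $[t_1,\infty)$. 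In all cases $F(r)/f_{\kappa_1}(r)\ge c_0:=\min\bigl\{r_0^{C_K}/f_{\kappa_1}(r_0),\,C_K r_0^{C_K-1}/\cosh(\sqrt{\abs k}\,r_0)\bigr\}>0$, which depends only on $C_K$, $r_0$ and $k$.

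\emph{Main obstacle.} The Rauch comparison (Lemma~\ref{vahakpinclemma1}), the supersolution identity of Step~2, and the monotonicity of the Wronskian are routine. The delicate point is the ``initial layer'' $[r_0,t_1]$ in the case $W(r_0)<0$: there $F/f_{\kappa_1}$ genuinely decreases, and the argument works only because this decrease is strictly smaller than the initial value $r_0^{C_K}/f_{\kappa_1}(r_0)$, so that the resulting infimum remains positive and quantitatively controlled. This relies essentially on the convexity of $f_{\kappa_1}$ (hence on the integrability of $f_{\kappa_1}^{-2}$) and on the explicit values $f_{\kappa_1}(r_0),f_{\kappa_1}'(r_0)$ provided by the lower curvature bound $K_M\ge k$ on $B(o,r_0)$; a minor technical point, not discussed above, is the smoothing of $\kappa_1,\kappa_2$ required to invoke Lemma~\ref{vahakpinclemma1} verbatim while keeping the pinching inequality $\abs{\kappa_1}\le C_K\abs{\kappa_2}$.
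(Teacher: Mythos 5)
The paper does not prove this lemma at all: it is quoted from Cheng \cite{cheng} and V\"ah\"akangas \cite{vahakangaspinch}, with only the remark that the proofs rest on the Rauch comparison theorem. Your argument is therefore a genuine, self-contained substitute, and I believe it is correct. The reduction in Step~1 is sound (the pinching hypothesis applied to the extremal radial planes gives exactly $\abs{\kappa_1}\le C_K\abs{\kappa_2}$, and Lemma~\ref{vahakpinclemma1} sandwiches $\abs{V}$ from below by $f_{\kappa_2}$ and $\abs{\bar V}$ from above by $f_{\kappa_1}$); the supersolution identity $F''=\bigl(C_K(C_K-1)(f_{\kappa_2}'/f_{\kappa_2})^2+C_K\abs{\kappa_2}\bigr)F\ge\abs{\kappa_1}F$, the monotone Wronskian, and the explicit cancellation in Step~3 yielding $C_Kr_0^{C_K-1}/\cosh(\sqrt{\abs{k}}\,r_0)$ all check out (I verified the algebra, including the tangent-line bound $\int_{r_0}^{\infty}f_{\kappa_1}^{-2}\le\bigl(f_{\kappa_1}(r_0)f_{\kappa_1}'(r_0)\bigr)^{-1}$). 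Compared with the cited proofs, which run a Riccati/logarithmic-derivative comparison directly on the Jacobi fields and integrate $(\log\abs{\bar V})'\le C_K(\log\abs{V})'$, your route first passes to scalar comparison solutions and then does a Sturm--Wronskian argument; this buys a completely explicit constant at the cost of the smoothing step.

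One caveat deserves more than the passing mention you give it. The "routine smoothing" of $\kappa_1,\kappa_2$ cannot in general preserve the inequality $\abs{\kappa_1}\le C_K\abs{\kappa_2}$ with the \emph{same} constant: any mollification that keeps $\kappa_1$ a lower bound and $\kappa_2$ an upper bound inflates the ratio to $(1+\ve)C_K$ for some $\ve>0$. Since $C_K$ appears as an \emph{exponent} in the conclusion, you cannot simply absorb this into the constant $c_0$ ($\abs{V_r}^{(1+\ve)C_K}\ge c_0\abs{\bar V_r}$ is weaker than the claim, because $\abs{V_r}\ge r\to\infty$). The fix is a limiting argument: run your Steps 2--3 with the smoothed data and constant $(1+\ve)C_K$, observe that the resulting lower bound $c_0(\ve)$ is continuous in $\ve$ (your formula is explicit), and let $\ve\to0$ to recover the stated exponent $C_K$. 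With that addition the proof is complete; also note in passing that the explicit initial values $f_{\kappa_2}(r_0)=r_0$, $f_{\kappa_1}(r_0)=\sinh(\sqrt{\abs{k}}\,r_0)/\sqrt{\abs{k}}$ are likewise only exact before smoothing and survive the same limit.
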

 
 To prove the solvability of the minimal graph equation, we will need an estimate for the gradient
 of a certain angular function. This estimate can be obtained in terms of Jacobi fields:
 \begin{lem}\cite[Lemma 2]{vahakangaspinch}\label{anglegrad}
    Let $x_0\in M\setminus \{o\}, \, U=M\setminus \gamma^{o,x_0}(\R)$, and define $\theta
    \colon U \to [0,\pi], \, \theta(x) = \an_o(x_0,x)\coloneqq \arccos\ang{\dot{\gamma}_0^{o,x_0},
    \dot{\gamma}_0^{o,x}}$. Let $x\in U$ and $\gamma=\gamma^{o,x}$. Then there exists a 
    Jacobi field $W$ along $\gamma$ with $W(0)=0,\, W_0' \bot \dot{\gamma}_0,$ and $\abs{W_0'}=1$
    such that
      $$
	\abs{\nabla \theta(x)} \le \frac{1}{\abs{W(r(x))}}.
      $$
 \end{lem}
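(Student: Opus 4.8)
The plan is to push the estimate down to the unit sphere $\Ss^{n-1}\subset T_oM$, where $\theta$ becomes a distance function, and to carry the bound back up to $M$ via the Jacobi field describing how $\exp_o$ distorts the geodesic sphere $\p B(o,r)$. Write $\omega\coloneqq\dot\gamma_0=\dot\gamma_0^{o,x}$ and $\omega_0\coloneqq\dot\gamma_0^{o,x_0}$ (unit vectors in $T_oM$) and $r\coloneqq r(x)>0$; since $x\in U$ we have $\omega\neq\pm\omega_0$. Equip $\Ss^{n-1}$ with the round metric $g_0$ induced by $g_M|_o$, and let $\bar\theta\colon\Ss^{n-1}\to[0,\pi]$, $\bar\theta(\zeta)=\arccos\ang{\omega_0,\zeta}$; this is precisely the Riemannian distance from $\omega_0$ on $(\Ss^{n-1},g_0)$, hence smooth with $\abs{\nabla^{g_0}\bar\theta}\equiv1$ on $\Ss^{n-1}\setminus\{\pm\omega_0\}$. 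In geodesic polar coordinates $M\setminus\{o\}\cong(0,\infty)\times\Ss^{n-1}$, $(t,\zeta)\mapsto\exp_o(t\zeta)$, one has $\theta\big(\exp_o(t\zeta)\big)=\bar\theta(\zeta)$, so $\theta$ is independent of the radial variable.

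Hence $\nabla\theta(x)\perp\dot\gamma_r$, and $\nabla\theta(x)$ equals the gradient of $\theta|_{\p B(o,r)}$ with respect to the induced metric. The map $\varphi_r\colon\Ss^{n-1}\to\p B(o,r)$, $\varphi_r(\zeta)=\exp_o(r\zeta)$, is a diffeomorphism; letting $h_r$ denote the pullback under $\varphi_r$ of the metric of $M$, the map $\varphi_r\colon(\Ss^{n-1},h_r)\to\p B(o,r)$ is an isometry and $\theta|_{\p B(o,r)}\circ\varphi_r=\bar\theta$, so $\abs{\nabla\theta(x)}=\abs{\nabla^{h_r}\bar\theta(\omega)}_{h_r}$. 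For $u\in T_\omega\Ss^{n-1}\subset\omega^\perp\subset T_oM$ the standard variation-through-geodesics argument identifies $d(\varphi_r)_\omega(u)$ with $J_u(r)$, where $J_u$ is the Jacobi field along $\gamma=\gamma^{o,x}$ with $J_u(0)=0$ and $J_u'(0)=u$; thus $h_r(u,u)=\abs{J_u(r)}^2$ for every such $u$.

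Now pick a unit vector $u^*\in T_\omega\Ss^{n-1}$ minimising $u\mapsto\abs{J_u(r)}$ over the compact unit sphere of $T_\omega\Ss^{n-1}$, and set $W\coloneqq J_{u^*}$. Then $W(0)=0$, $W_0'=u^*\perp\dot\gamma_0$ and $\abs{W_0'}=1$, so $W$ is of the required form, and by the choice of $u^*$ we have $h_r(u,u)=\abs{J_u(r)}^2\ge\abs{W(r)}^2\,\abs{u}_{g_0}^2$ for all $u\in T_\omega\Ss^{n-1}$, i.e.\ $h_r\ge\abs{W(r)}^2g_0$ as quadratic forms at $\omega$. Passing to inverse forms (which reverses the inequality for positive definite forms) and evaluating on the covector $d\bar\theta_\omega$ gives
\[
  \abs{\nabla\theta(x)}^2=\abs{\nabla^{h_r}\bar\theta(\omega)}_{h_r}^2\le\frac{1}{\abs{W(r)}^2}\,\abs{\nabla^{g_0}\bar\theta(\omega)}_{g_0}^2=\frac{1}{\abs{W(r(x))}^2},
\]
which is the assertion.

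The argument is mostly bookkeeping; the one delicate point is the choice of $W$. Taking instead the Jacobi field whose initial velocity points along $\nabla^{g_0}\bar\theta(\omega)$ does \emph{not} yield the stated bound, since for a self-adjoint positive operator $A$ and any unit vector $e$ one has $\ang{A^{-1}e,e}\ang{Ae,e}\ge1$, with equality only when $e$ is an eigenvector of $A$; one genuinely needs the Jacobi field realising the least stretching of $\varphi_r$ at $\omega$, which exists by compactness. Note that no curvature lower bound or pinching hypothesis is used here — those enter only afterwards, through Lemmas~\ref{vahakpinclemma1} and \ref{vahakpinclemma3}, to turn $\abs{W(r)}$ into an explicit function of $r$.
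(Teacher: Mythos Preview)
Your proof is correct. The paper does not supply its own argument for this lemma; it is quoted from \cite[Lemma 2]{vahakangaspinch}, with the blanket remark preceding Lemmas~\ref{vahakpinclemma1}--\ref{anglegrad} that the proofs can be found there. Your approach --- recognising $\theta$ as the pullback of the spherical distance function under polar coordinates, identifying the induced metric on $\partial B(o,r)$ via Jacobi-field lengths, and choosing $W$ to realise the least stretching of $\varphi_r$ at $\omega$ --- is the natural one and is essentially what the original source does. (The paper's remark that these lemmas rest on the Rauch comparison theorem applies more to Lemmas~\ref{vahakpinclemma1} and \ref{vahakpinclemma3} than to this one; as you note, no curvature comparison enters here.) Your closing observation that the ``obvious'' Jacobi field, the one with initial velocity along $\nabla^{g_0}\bar\theta(\omega)$, would yield the inequality $\abs{\nabla\theta(x)}\ge 1/\abs{W(r)}$ in the wrong direction is a worthwhile clarification of why the minimisation step is genuinely necessary.
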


 \subsection{Young functions}
 Let $\phi\colon [0,\infty) \to [0,\infty)$ be a homeomorphism and let $\psi=\phi^{-1}$.
 Define \emph{Young functions} $\Phi$ and $\Psi$ by setting
    $$
      \Phi(t) = \int_0^t \phi(s) \, ds
    $$
 and
    $$
      \Psi(t) = \int_0^t \psi(s) \, ds
    $$
 for each $t\in[0,\infty)$. Then we have the following \emph{Young's inequality}
    $$
      ab\le \Phi(a) + \Psi(b)
    $$
 for all $a,b\in[0,\infty)$. The functions $\Phi$ and $\Psi$ are said to form a \emph{complementary
 Young pair}. Furthermore, $\Phi$ (and similarly $\Psi$) is a continuous, strictly increasing,
 and convex function satisfying
    $$
      \lim_{t\to0^+} \frac{\Phi(t)}{t} = 0
    $$
 and
    $$
      \lim_{t\to\infty} \frac{\Phi(t)}{t} = \infty.
    $$
 For a more general definition of Young functions see e.g. \cite{kufneroldrichfucik}.
 
 As in \cite{vahakangasyoung}, we consider complementary Young pairs of a special type. For that, 
 suppose that a homeomorphism $G\colon [0,\infty) \to [0,\infty)$ is a Young function that 
 is a diffeomorphism on $(0,\infty)$ and satisfies
    \begin{equation}\label{young1}
     \int_0^1 \frac{dt}{G^{-1}(t)} < \infty
    \end{equation}
 and
    \begin{equation}\label{young2}
     \lim_{t\to0} \frac{tG'(t)}{G(t)} = 1.
    \end{equation}
Then we define $F\colon
[0,\infty) \to [0,\infty)$ so that $G$ and $F$ form a complementary
Young pair. The space of such functions $F$ will be denoted by $\F$. Note that if $F \in\F$,
then also $\lambda F\in \F$ and $F(\lambda \cdot) \in \F$ for every $\lambda>0$. In 
\cite{vahakangasyoung} it is proved that for fixed $\ve_0\in(0,1)$ there exists $F\in\F$ 
such that
    \begin{equation}\label{fexists}
      F(t) \le t^{1+\ve_0} \exp\left( -\tfrac{1}{t}\Big(\log\Big(e + \tfrac{1}{t}\Big)
      \Big)^{-1-\ve_0} \right)
    \end{equation}
for all $t\in [0,\infty)$. The construction of such $F$ is done by first choosing $\lambda
\in (1,1+\ve_0)$ and a homeomorphism $H\colon [0,\infty)\to [0,\infty)$ that is a diffeomorphism
on $(0,\infty)$ and satisfies
	  \begin{equation}\label{Hdefin} 
	  H(t) =
	  \begin{cases}
	     \left( \log \tfrac{1}{t} \right)^{-1} \left( \log \log \tfrac{1}{t} 
	     \right)^{-\lambda} & \text{if }t\text{ is small enough,} \\
	     t^{1/\ve_0} & \text{if }t\text{ is large enough},
	  \end{cases}
	  \end{equation}
and then setting $G(t)=\int_0^t H(s) \, ds$ and $F(t)=\int_0^t H^{-1}(s) \, ds$. 
From now on, $G$ and $F$ will denote the complementary
Young pair obtained via this procedure. For details, see \cite{vahakangasyoung} and 
the proof of Proposition \ref{g1exist} below.

Since $G$ is convex, we have $G(t)\ge ct$ for all $t\ge1$. Therefore $G^{-1}(t) \le ct$ for
all $t$ large enough and this implies that $\int_0^{\infty} 1/G^{-1} = \infty$. From this,
together with \eqref{young1}, we conclude that the function $\psi$, defined by
    $$
      \psi(t) = \int_0^t \frac{ds}{G^{-1}(s)},
    $$
is a homeomorphism $[0,\infty)\to[0,\infty)$ that is a diffeomorphism on $(0,\infty)$. 
Hence the same is true for its inverse
    \begin{equation}\label{phidef}
      \varphi = \psi^{-1}\colon [0,\infty) \to [0,\infty).
    \end{equation}
The following lemma collects the properties of $\varphi$.
  \begin{lem}\cite[Lemma 4.5]{vahakangasyoung}
   The function $\varphi  \colon [0,\infty) \to [0,\infty)$ is a homeomorphism that is 
   smooth on $(0,\infty)$ and satisfies
    \begin{equation}\label{phiprop1}
     G\circ\varphi'=\varphi
    \end{equation}
    and
    \begin{equation}\label{phiprop2}
     \lim_{t\to0+} \frac{\varphi''(t)\varphi(t)}{\varphi'(t)^2} = 1.
    \end{equation}
  \end{lem}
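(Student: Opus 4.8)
The plan is to establish the two properties of $\varphi$ directly from its construction as the inverse of $\psi(t)=\int_0^t ds/G^{-1}(s)$, exactly as in \cite[Lemma 4.5]{vahakangasyoung}; since the statement is quoted verbatim from that source, the proof is essentially a matter of unwinding definitions and differentiating. First I would record that $\psi$ is a homeomorphism of $[0,\infty)$ onto itself which is a diffeomorphism on $(0,\infty)$ — this was already argued in the text above from \eqref{young1} together with the superlinear growth of $G$ — so its inverse $\varphi$ is automatically a homeomorphism of $[0,\infty)$ onto itself, smooth on $(0,\infty)$. The key computation is that, for $s>0$, $\psi'(s)=1/G^{-1}(s)$, hence by the inverse function theorem $\varphi'(t)=1/\psi'(\varphi(t))=G^{-1}(\varphi(t))$. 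Applying $G$ to both sides gives $G(\varphi'(t))=\varphi(t)$, which is precisely \eqref{phiprop1}.

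Next I would differentiate the identity $\varphi'=G^{-1}\circ\varphi$ once more: $\varphi''(t)=(G^{-1})'(\varphi(t))\,\varphi'(t)$, and since $(G^{-1})'(u)=1/G'(G^{-1}(u))=1/G'(\varphi'(t))$ when $u=\varphi(t)$, we get $\varphi''(t)=\varphi'(t)/G'(\varphi'(t))$. Therefore
  $$
    \frac{\varphi''(t)\varphi(t)}{\varphi'(t)^2}
    = \frac{\varphi(t)}{\varphi'(t)\,G'(\varphi'(t))}
    = \frac{G(\varphi'(t))}{\varphi'(t)\,G'(\varphi'(t))},
  $$
using \eqref{phiprop1} in the last step. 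Now as $t\to0+$ we have $\varphi(t)\to0$ and hence $\varphi'(t)=G^{-1}(\varphi(t))\to G^{-1}(0)=0$, so the argument $\varphi'(t)$ of $G$ tends to $0$; by hypothesis \eqref{young2}, $\lim_{u\to0}uG'(u)/G(u)=1$, which says exactly that $G(u)/(uG'(u))\to1$. Substituting $u=\varphi'(t)\to0$ yields \eqref{phiprop2}.

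The only genuinely non-routine points are the justifications that the various inverse and composite functions are differentiable where claimed, and that the limits behave as asserted; these are handled by the standing assumptions that $G$ (and with it $H=G'$, via the explicit form \eqref{Hdefin}) is a diffeomorphism on $(0,\infty)$ together with \eqref{young1}--\eqref{young2}, so there is no real obstacle. For completeness one should also note that $\varphi$ is only guaranteed $C^1$ (not $C^\infty$) at the origin, but the lemma claims smoothness only on the open half-line, so nothing more is needed. In short, the whole proof reduces to two applications of the inverse function theorem plus one substitution into the hypothesis \eqref{young2}, and I would present it in roughly the four displays above.
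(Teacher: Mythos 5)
Your proof is correct: the identity $\varphi'(t)=G^{-1}(\varphi(t))$ from the inverse function theorem immediately gives \eqref{phiprop1}, and differentiating once more together with the substitution $u=\varphi'(t)\to 0^+$ into \eqref{young2} gives \eqref{phiprop2}. The paper itself offers no proof, simply quoting \cite[Lemma 4.5]{vahakangasyoung}, and your argument is exactly the standard unwinding of the definitions used there, so nothing further is needed.
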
 
From now on, $\varphi$ will be the function defined in \eqref{phidef} such that the corresponding
$F\in\F$ satisfies \eqref{fexists}. Using the computations done in \cite{vahakangasyoung}, 
we obtain a more specific formula for the
function $\varphi$. Namely, we know that $G^{-1}(t) \approx t/H(t)$ and hence
    $$
      \psi(t) = \int_0^t\frac{ds}{G^{-1}(s)} \approx \int_0^t \frac{1}{s(\log\tfrac{1}{s})
      (\log\log\tfrac{1}{s})^{1+\ve_0}} = \frac{1}{\ve_0}\big(\log\log\tfrac{1}{t}\big)^{-\ve_0}.
    $$
Here and in what follows $\approx$ means that the ratio of the two sides tends to $1$ as
$t\to0^+$.
From this it is straightforward to see that
    \begin{equation}\label{phiexpli}
      \varphi(t) \approx \exp\Big(-\exp \big( \tfrac{1}{\ve_0 t}\big)^{\ve_0} \Big).
    \end{equation}

We will also need complementary Young functions $G_1$ and $F_1$ to deal with the second derivative
of the function $\varphi$. The existence of these 
functions will be proved by the following proposition which is just a modification of 
\cite[Proposition 4.3]{vahakangasyoung} since in the construction of the Young functions we will replace the
function $H$ in \cite{vahakangasyoung} by $H^2$.
   \begin{prop}\label{g1exist}
      Let $\ve_0\in(0,1)$ and $\lambda\in(1,1+\ve_0)$ be as in \eqref{Hdefin}. Then there 
      exist complementary Young functions $G_1$ and $F_1$, 
      and a constant $c>0$ such that $G_1$ satisfies
	\begin{equation}\label{g1prop}
	 G_1\big( \varphi''(t)\big)\approx \varphi(t)
	\end{equation}
       and $F_1$ satisfies 
      \begin{equation}\label{F1estim}
       F_1(t) \le c t \exp \Big( - \tfrac{2^{\lambda}}{\sqrt{t}} \big( \log\tfrac{1}{t} \big)^{-
	      \lambda}  \Big)
      \end{equation}
for all sufficiently small $t>0$.
   \end{prop}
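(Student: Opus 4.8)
The plan is to mimic the construction of the complementary Young pair $(G,F)$ carried out in \cite{vahakangasyoung}, but with the generating homeomorphism $H$ replaced by $H^2$, where $H$ is the function from \eqref{Hdefin}. Concretely, set $\tilde H = H^2$, which near $0$ behaves like $(\log\tfrac1t)^{-2}(\log\log\tfrac1t)^{-2\lambda}$ and near $\infty$ like $t^{2/\ve_0}$; in particular $\tilde H$ is again a homeomorphism of $[0,\infty)$ onto itself, a diffeomorphism on $(0,\infty)$, and one checks the analogues of \eqref{young1}--\eqref{young2}: $\int_0^1 dt/\tilde H^{-1}(t)<\infty$ because $\tilde H^{-1}(t)\approx\sqrt{G^{-1}(t)}$ decays polynomially in a logarithmic variable, and $\lim_{t\to0}t\tilde H'(t)/\tilde H(t)=2\lim_{t\to0}tH'(t)/H(t)\cdot\tfrac{H(t)}{H(t)}$ is finite (indeed it is the limit of $2tH'(t)/H(t)$, which by the logarithmic form of $H$ tends to $0$, so $\tilde H$ even satisfies the stronger normalization needed to run the argument). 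Then define $G_1(t)=\int_0^t \tilde H(s)\,ds$ and $F_1(t)=\int_0^t \tilde H^{-1}(s)\,ds$; these automatically form a complementary Young pair since $\tilde H$ and $\tilde H^{-1}$ are mutually inverse increasing homeomorphisms. All of this is a verbatim transcription of the relevant part of \cite[Proposition 4.3]{vahakangasyoung} with $H$ replaced by $H^2$, so I would present it as such and cite that proposition for the details.

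For \eqref{g1prop}, the point is that $\varphi''$ is controlled by $\tilde H=H^2$ in the same way $\varphi'$ is controlled by $H$. From \eqref{phiprop1} we have $G\circ\varphi'=\varphi$, i.e. $\varphi'=G^{-1}\circ\varphi$, and differentiating gives $\varphi''=(G^{-1})'(\varphi)\,\varphi'=\dfrac{\varphi'}{G'(G^{-1}(\varphi))}=\dfrac{\varphi'}{H(\varphi')}$, using $G'=H$ and $G^{-1}(\varphi)=\varphi'$. Since $G'\circ\varphi'=H\circ\varphi'=\varphi/\varphi'$ by \eqref{phiprop1} (differentiate $G\circ\varphi'=\varphi$ only where convenient — actually more directly, $H(\varphi')=G'(\varphi')$ and from $G(\varphi')=\varphi$ with $G(t)\approx tH(t)$ near $0$ one gets $H(\varphi')\approx\varphi/\varphi'$), we obtain $\varphi''\approx (\varphi')^2/\varphi$. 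Hence $\tilde H(\varphi'')=H(\varphi'')^2$, and since $\varphi''\to0$ and $H(\varphi'')\approx\varphi''/$(something)$\cdots$ — here I would instead argue cleanly: $G_1(\varphi'')=\int_0^{\varphi''}H(s)^2\,ds\approx \varphi''\,H(\varphi'')^2$ near $0$ (because $s\mapsto H(s)^2$ is slowly varying, so the integral is asymptotic to the endpoint contribution, exactly as $G(t)\approx tH(t)$). Plugging $\varphi''\approx(\varphi')^2/\varphi$ and $H(\varphi')\approx\varphi/\varphi'$ yields $H(\varphi'')\approx H\big((\varphi')^2/\varphi\big)$; using the explicit logarithmic form of $H$ and the asymptotics \eqref{phiexpli} for $\varphi$ (so that $\log\tfrac1{\varphi'}$, $\log\tfrac1{\varphi}$, $\log\tfrac1{\varphi''}$ are all comparable up to constants), one checks $H(\varphi'')\approx H(\varphi')$ up to a multiplicative constant, hence $G_1(\varphi'')\approx\varphi''H(\varphi')^2\approx \frac{(\varphi')^2}{\varphi}\cdot\frac{\varphi^2}{(\varphi')^2}=\varphi$. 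This gives \eqref{g1prop}. The verification \eqref{F1estim} is then the same estimate as \eqref{fexists} in \cite{vahakangasyoung} but with $H^{-1}$ replaced by $\tilde H^{-1}=\sqrt{H^{-1}}$: one computes $F_1(t)=\int_0^t\tilde H^{-1}(s)\,ds$, uses $\tilde H^{-1}(s)\approx\sqrt{H^{-1}(s)}$, and the explicit form of $H^{-1}$ coming from \eqref{Hdefin} to produce the claimed bound $ct\exp\big(-2^{\lambda}t^{-1/2}(\log\tfrac1t)^{-\lambda}\big)$; the extra square root is exactly what turns the $\tfrac1t$ in the exponent of \eqref{fexists} into $\tfrac{1}{\sqrt t}$ and the exponent $1+\ve_0$ into $1$ (up to the constant $c$).

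The main obstacle I anticipate is the asymptotic-integration step, i.e. justifying that $\int_0^t H(s)^2\,ds\approx t\,H(t)^2$ and that the various logarithmic quantities $\log\tfrac1{\varphi(t)}$, $\log\tfrac1{\varphi'(t)}$, $\log\tfrac1{\varphi''(t)}$ are pairwise comparable, so that $H$ evaluated at any of them differs only by a bounded factor. This is where one must be careful that the $\approx$ (ratio tending to $1$) versus "comparable up to constants" distinction is handled correctly: \eqref{g1prop} is stated with $\approx$, so one needs the ratio to tend to $1$, which forces a more delicate expansion than merely two-sided bounds. The safest route is to follow \cite{vahakangasyoung} line by line, where exactly the analogous statement $G\circ\varphi'\approx\varphi$ is upgraded to the identity \eqref{phiprop1}; by the same token here one does not literally need $G_1\circ\varphi''=\varphi$ but only the asymptotic \eqref{g1prop}, and that is obtained by substituting the sharp asymptotics \eqref{phiexpli} into the elementary relation $\varphi''\approx(\varphi')^2/\varphi$ and using that $H$ is a product of powers of iterated logarithms, for which $H(cu)/H(u)\to1$ as $u\to0^+$ for any fixed $c>0$. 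Modulo this routine but fiddly asymptotic bookkeeping — which is essentially copied from \cite{vahakangasyoung} — the proposition follows.
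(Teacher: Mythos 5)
Your construction and the route to \eqref{g1prop} are the same as the paper's: you set $G_1(t)=\int_0^t H(s)^2\,ds$, take $F_1$ to be its Young conjugate, and obtain $G_1(\varphi'')\approx\varphi$ from $\varphi''\approx(\varphi')^2/\varphi$ (i.e.\ \eqref{phiprop2}), the relation $H(\varphi')\approx\varphi/\varphi'$, the endpoint asymptotics $G_1(t)\approx tH(t)^2$, and the slow variation of $H$; the paper organizes exactly this via $R(t)=t/H(t)^2$, $G_1^{-1}\approx R$ and $\varphi'\approx\varphi/H(\varphi)$, so that part of your argument is fine in outline, and the ``ratio tends to $1$'' bookkeeping you flag is indeed the only delicate point there (your l'Hospital-style remark for \eqref{young2} is also essentially the paper's).

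The genuine problem is in your treatment of $F_1$: the inverse of $\tilde H=H^2$ is $(H^2)^{-1}(t)=H^{-1}(\sqrt t\,)$, \emph{not} $\sqrt{H^{-1}(t)}$, and likewise your claim $\tilde H^{-1}\approx\sqrt{G^{-1}}$ used for \eqref{young1} is false (moreover \eqref{young1} concerns $G_1^{-1}$, and the correct check is $G_1^{-1}(t)\approx t/H(t)^2$, so $\int_0^1 H(t)^2t^{-1}\,dt<\infty$, immediate from \eqref{Hdefin}). This is not cosmetic: since $\log\bigl(1/\sqrt{H^{-1}(t)}\bigr)=\tfrac12\log\bigl(1/H^{-1}(t)\bigr)$ is of order $1/t$ (up to logarithmic factors), your identity would leave the exponent at order $1/t$ rather than $1/\sqrt t$, so the asserted passage from \eqref{fexists} to \eqref{F1estim} does not follow; and the intermediate bound $F_1(t)\lesssim t\sqrt{H^{-1}(t)}$ is actually false, because $F_1(t)\ge\tfrac t2\,(H^2)^{-1}(t/2)=\tfrac t2\,H^{-1}\bigl(\sqrt{t/2}\,\bigr)$, which is far larger. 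The correct step (the paper's) is $F_1'(t)=(G_1')^{-1}(t)=(H^2)^{-1}(t)=H^{-1}(\sqrt t\,)$, hence $F_1(t)\le t\,H^{-1}(\sqrt t\,)$, and then the asymptotics of $H^{-1}$ (via the Lambert $W$ function, or the rough inversion $\log\tfrac{1}{H^{-1}(u)}\approx u^{-1}\bigl(\log\tfrac1u\bigr)^{-\lambda}$) give exactly \eqref{F1estim}, the factor $2^{\lambda}$ coming from $\log\tfrac{1}{\sqrt t}=\tfrac12\log\tfrac1t$. With that identity corrected, the rest of your proposal goes through.
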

    \begin{proof}
      Let $H\colon [0,\infty) \to [0,\infty)$ be as in \eqref{Hdefin}.
   We define $G_1(t) = \int_0^t H(s)^2 \,ds$. Then $G_1$ is a Young function and we denote by
   $F_1$ its Young conjugate. Notice that $G_1'(t) = H(t)^2$ and that $t(H^2)'(t)/H(t)^2 \to 0$
   as $t\to 0$. Hence, by l'Hospital's rule, we have
	$$
	  \lim_{t\to0} \frac{tG_1'(t)}{G_1(t)} = \lim_{t\to0} \frac{\tfrac{d}{dt}(tG_1'(t))}{G_1'
	  (t)} = 1
	$$
   and we see that $G_1$ satisfies \eqref{young2}. Next, denote $R(t) = t/H(t)^2$. Then it is 
   easy to see that $R(kt) \approx kR(t)$ for every constant $k>0$ and we get
	$$
	  R\big(G_1(t)\big) \approx R\big(tH(t)^2\big) = \frac{tH(t)^2}{H\big(tH(t)^2\big)^2} \approx t,
	$$
   which gives us $G_1^{-1}(t) \approx R(t)$. 
   It follows that $G_1$ satisfies \eqref{young1} and hence $F_1 \in \F$. 
   On the other hand $\varphi(t) = \psi^{-1}(t)$ and 
	$$
	  \psi'(t) = \frac{1}{G^{-1}(t)} \approx \frac{H(t)}{t},
	$$
   and therefore
	$$
	  \varphi'(t) = \frac{1}{\psi'\big(\varphi(t)\big)} \approx \frac{\varphi(t)}{H\big(
	  \varphi(t)\big)}.
	$$
   By \eqref{phiprop2} we obtain 
	$$
	  \varphi''(t) \approx \frac{\varphi(t)}{H\big(\varphi(t)\big)^2} = R\big(\varphi(t)
	  \big) \approx G_1^{-1}\big(\varphi(t)\big),
	$$
   and so  
      \begin{equation}\label{G1phiestim}
      	G_1\big(\varphi''(t)\big) \approx \varphi(t).
      \end{equation}  
 Thus we are left to estimate $F_1$ from above.
   
 It is straightforward to check that 
      $$
	(H^2)^{-1}(t) = \exp\Big(-\exp\big(\lambda W(\lambda^{-1}t^{-1/(2\lambda)})\big)\Big),
      $$
 for all sufficiently small $t$, where $W$ is the Lambert $W$ function defined by the identity
 $W(s)e^{W(s)} = s$. Since $F_1'(t) = (G_1')^{-1}(t) = (H^2)^{-1}(t)$ and
 $W(s) \ge \log s- \log\log s$ for all $s\ge e$, we get for sufficiently small $t$
      \begin{align*}
	F_1(t) &= \int_0^t (H^2)^{-1}(s) \,ds \le t(H^2)^{-1}(t) \\
	&= \frac{t}{\exp\Big(\exp \big( \lambda W(\lambda^{-1}t^{-1/2\lambda}) \big)\Big)} \\
	&\le \frac{t}{\exp\Big(\exp \big( \lambda \log(\lambda^{-1}t^{-1/2\lambda}) -\lambda 
	    \log\log(\lambda^{-1}t^{-1/2\lambda}) \big)\Big)} \\
	&= \frac{t}{\exp\Big( (\lambda^{-1}t^{-1/2\lambda})^{\lambda} \big(\log(
	    \lambda^{-1}t^{-1/2\lambda})\big)^{\lambda}    \Big)} \\
	&= t \exp \Big( -\tfrac{1}{\lambda^{\lambda}\sqrt{t}} \big( \log \tfrac{1}{\lambda}
	     + \tfrac{1}{2\lambda} \log\tfrac{1}{t}  \big)^{-\lambda} \Big) \\
	&\le c t \exp \Big( - \tfrac{2^{\lambda}}{\sqrt{t}} \big( \log\tfrac{1}{t} \big)^{-
	      \lambda}  \Big).
      \end{align*} 	
   \end{proof}

 \subsection{Minimal graph equation}
 Let $\Omega\subset M$ be an open set. Then a function $u\in W^{1,1}_{\loc}(\Omega)$ is a
 \emph{(weak) solution of the minimal graph equation} if
    \begin{equation}
      \int_{\Omega} \frac{\ang{\nabla u,\nabla \varphi}}{\sqrt{1+\abs{\nabla u}^2}} = 0
    \end{equation}
 for every $\varphi\in C_0^{\infty}(\Omega)$. Note that the integral is well-defined since
    $$
      \sqrt{1+\abs{\nabla u}^2} \ge \abs{\nabla u} \quad \text{a.e.},
    $$
 and thus
    $$
      \int_{\Omega} \frac{\abs{\ang{\nabla u,\nabla \varphi}}}{\sqrt{1+\abs{\nabla u}^2}} \le
       \int_{\Omega} \frac{\abs{\nabla u}\abs{\nabla \varphi}}{\sqrt{1+\abs{\nabla u}^2}} \le
       \int_{\Omega} \abs{\nabla \varphi} < \infty.
    $$
    
 It is known that under certain conditions there exists a (strong) solution of \eqref{mingraph}
 with given boundary values. Namely, let $\Omega \subset\subset M$ be a smooth relatively
 compact open set whose boundary has positive mean curvature with respect to inwards
 pointing unit normal. Then for each $f\in C^{2,\alpha}(\bar{\Omega})$ there exists a unique
 $u\in C^{\infty}(\Omega)\cap C^{2,\alpha}(\bar{\Omega})$ that solves the minimal graph equation
 \eqref{mingraph} in $\Omega$ and has the boundary values $u|\p\Omega = f|\p\Omega$.

 \section{Asymptotic Dirichlet problem for minimal graph equation}
 
We begin by the following Caccioppoli-type inequality which will have a crucial role in the  
proof of the solvability of the minimal graph equation.
\begin{lem} \label{caccioppoli}
Suppose $\varphi \colon [0,\infty) \to [0,\infty)$ is a homeomorphism that is smooth on 
$(0,\infty)$ and
let $U\subset\subset M$ be open. Suppose that $\eta\ge0$ is a $C^1(U)$ function and let 
$u,\theta \in L^{\infty}(U)\cap W^{1,2}(U)$ be continuous functions such that 
 $u\in C^2(U)$ is a solution to the minimal graph equation \eqref{mingraph} in $U$.
Denote 
  $$
    h=\frac{\abs{u-\theta}}{\nu},
  $$ 
where $\nu>0$ is a constant, and assume that
  $$
    \eta^2 \varphi(h) \in W^{1,2}_0(U).
  $$
Then we have
  \begin{align} \label{caceq1}
  \int_U \eta^2 \varphi'(h) \frac{\abs{\nabla u}^2}{\sqrt{1+\abs{\nabla u}^2}} 
  &\le C_{\ve} \int_U \eta^2 \varphi'(h) \abs{\nabla \theta}^2 +  (4+\ve)\nu^2 \int_U
  \frac{\varphi^2}{\varphi'}(h) \abs{\nabla \eta}^2
  \end{align}
 for any fixed $\ve>0$.
\end{lem}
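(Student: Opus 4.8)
The plan is to run the standard Caccioppoli argument: test the weak formulation of the minimal graph equation against a function built from $\eta$, $\varphi(h)$, and the sign of $u-\theta$, then absorb the bad terms using Young's inequality with parameter $\ve$. First I would introduce the unit normal denominator $W = \sqrt{1+\abs{\nabla u}^2}$ and pick the test function $\psi = \eta^2 \varphi(h)\,\sigma$, where $\sigma = \operatorname{sgn}(u-\theta)$ (one should really approximate $\abs{u-\theta}$ by a smooth function, or note that on $\{u=\theta\}$ of positive measure $\nabla(u-\theta)=0$ a.e., so the sign causes no trouble; I would remark this and proceed formally). By hypothesis $\eta^2\varphi(h)\in W^{1,2}_0(U)$ and $u,\theta\in L^\infty\cap W^{1,2}$, so $\psi$ is an admissible test function and $\nabla\psi = 2\eta\nabla\eta\,\varphi(h)\sigma + \eta^2\varphi'(h)\,\nabla h\,\sigma$ with $\nabla h\,\sigma = \nu^{-1}\nabla(u-\theta)$ a.e. Plugging into $\int_U \ang{\nabla u,\nabla\psi}/W = 0$ gives
  \begin{equation*}
    \frac{1}{\nu}\int_U \eta^2\varphi'(h)\,\frac{\ang{\nabla u,\nabla u-\nabla\theta}}{W}
    = -\,2\int_U \eta\,\varphi(h)\sigma\,\frac{\ang{\nabla u,\nabla\eta}}{W}.
  \end{equation*}
On the left I would write $\ang{\nabla u,\nabla u-\nabla\theta} = \abs{\nabla u}^2 - \ang{\nabla u,\nabla\theta}$, keep the first term, and move the second to the right-hand side, bounding $\abs{\ang{\nabla u,\nabla\theta}}/W \le \abs{\nabla u}\abs{\nabla\theta}/W \le \abs{\nabla\theta}$ (this is the key place where the $\sqrt{1+\abs{\nabla u}^2}$ in the denominator is exploited to gain an honest bound).

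Next comes the absorption. For the genuinely quadratic term coming from $\ang{\nabla u,\nabla\theta}$, apply Young's inequality $ab \le \delta a^2/2 + b^2/(2\delta)$ to $\abs{\nabla u}\abs{\nabla\theta}$ weighted by $\eta^2\varphi'(h)$: the $\abs{\nabla u}^2$ part is absorbed into the left side at the cost of $\delta$, producing the constant $C_\ve$ multiplying $\int_U \eta^2\varphi'(h)\abs{\nabla\theta}^2$. For the boundary-type term $2\nu\int_U \eta\,\varphi(h)\,\abs{\nabla u}\abs{\nabla\eta}/W$, I would split $\eta\varphi(h)\abs{\nabla\eta} = \big(\eta\sqrt{\varphi'(h)}\,\abs{\nabla u}/W\big)\cdot\big(\nu\,\varphi(h)\abs{\nabla\eta}/\sqrt{\varphi'(h)}\big)\cdot(\text{stray factors})$ — more precisely, write it as a product of $\eta\sqrt{\varphi'(h)}\,\abs{\nabla u}\,W^{-1/2}$ and $\eta^{?}\cdots$; the correct bookkeeping is: estimate $\abs{\nabla u}/W \le \abs{\nabla u}/\sqrt{1+\abs{\nabla u}^2}$ but actually keep $\abs{\nabla u}^2/W$ on the left, so I pair $\eta\,\abs{\nabla u}\,W^{-1/2}\sqrt{\varphi'(h)}$ with $2\nu\,\varphi(h)\,W^{-1/2}\abs{\nabla\eta}/\sqrt{\varphi'(h)}$ and apply $2ab \le \tilde\delta a^2 + b^2/\tilde\delta$. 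Since $W^{-1}\le 1$, the second factor squared is bounded by $\nu^2\varphi(h)^2\varphi'(h)^{-1}\abs{\nabla\eta}^2$ times a constant, and the first factor squared, $\eta^2\varphi'(h)\abs{\nabla u}^2/W$, is again absorbed on the left. Choosing $\tilde\delta$ and $\delta$ so that the total absorbed coefficient is strictly less than $1$ — and tracking the numerical constants so the leftover in front of $\int \nu^2(\varphi^2/\varphi')(h)\abs{\nabla\eta}^2$ comes out as $4+\ve$ (the $4$ arising from the $2$ in $2\nu$ squared, in the optimal split) — yields \eqref{caceq1}.

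The main obstacle I anticipate is not any single estimate but the careful constant-chasing needed to land exactly on the coefficient $4+\ve$ rather than some larger harmless constant: one has to choose the Young-inequality split for the $\nabla\eta$ term so that the part absorbed into $\int\eta^2\varphi'(h)\abs{\nabla u}^2/W$ can be taken arbitrarily small while the complementary part has coefficient exactly $4$, and simultaneously absorb the $\delta$-sized piece from the $\nabla\theta$ term. A secondary technical point is justifying the use of $\psi = \eta^2\varphi(h)\operatorname{sgn}(u-\theta)$ as a legitimate $W^{1,2}_0$ test function despite the non-smoothness of $\abs{\cdot}$ and of $\varphi$ at $0$: I would handle this by approximating $t\mapsto\abs{t}$ by smooth convex functions and using $\varphi\in C^\infty(0,\infty)$ together with $\varphi(0)=0$, dominated convergence, and the fact that $\nabla(u-\theta)=0$ a.e. on $\{h=0\}$, so that all integrands are well-defined and pass to the limit.
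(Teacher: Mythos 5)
Your proposal is correct and takes essentially the same route as the paper: the test function $\eta^{2}\varphi(h)\operatorname{sgn}(u-\theta)$ (which the paper writes equivalently as $\eta^{2}\varphi\big((u-\theta)^{+}/\nu\big)-\eta^{2}\varphi\big((u-\theta)^{-}/\nu\big)$), the same weak-formulation identity, and the same two Young-inequality absorptions using $\sqrt{1+|\nabla u|^{2}}\ge 1$, with the split optimized so the $\nabla\eta$-term coefficient tends to $4$, giving $4+\ve$. The paper's proof is exactly this constant-chasing, choosing $\ve_{1}$ small and $\ve_{2}=(2-\ve_{1})/4$, so your sketch matches it in all essentials.
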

  
\begin{proof}
 Define an auxiliary function $f$ by
    $$
      f=\eta^2 \varphi \left( \frac{(u-\theta)^+}{\nu}\right) -\eta^2\varphi  
      \left( \frac{(u-\theta)^-}{\nu}\right).
    $$
 Then it holds that $f\in W_0^{1,2}(U)$ and its gradient is given by
    $$
      \nabla f= \frac{1}{\nu} \eta^2 \varphi'(h)(\nabla u - \nabla \theta) + 2 \eta 
      \,sgn(u-\theta) \varphi(h) \nabla \eta.
    $$
 Since $u$ is a solution to the minimal graph equation, we can use $f$ as a test function in
    $$
      \int_U \frac{\langle \nabla u, \nabla f\rangle}{\sqrt{1+\abs{\nabla u}^2}} = 0,
    $$
 and obtain 
    \begin{align*}
	\int_U \eta^2 \varphi'(h) \frac{\abs{\nabla u}^2}{\sqrt{1+\abs{\nabla u}^2}}
	&= \int_U \eta^2 \varphi'(h) \frac{\langle \nabla u, \nabla \theta \rangle}{\sqrt{1+
		\abs{\nabla u}^2}} \\
	&\qquad - 2\nu \int_U \eta \,sgn(u-\theta) \varphi(h) \frac{\langle \nabla u, \nabla \eta
		\rangle}{\sqrt{1+\abs{\nabla u}^2}} \\
	&\le \int_U \eta^2 \varphi'(h) \frac{\abs{\nabla u} \abs{\nabla \theta}}{\sqrt{1+\abs{\nabla u}^2}} 
		+ 2\nu \int_U \eta \varphi(h) \frac{\abs{\nabla u} \abs{\nabla \eta}}{\sqrt{1+
		\abs{\nabla u}^2}}.
    \end{align*}
Next we use Young's inequality $ab\le (\ve/2)a^2+1/(2\ve)b^2$ and $\sqrt{1+\abs{\nabla u}^2} 
\ge 1$ to estimate the terms on the right hand side as
    $$
      \int_U \eta^2 \varphi'(h) \frac{\abs{\nabla u} \abs{\nabla \theta}}{\sqrt{1+\abs{\nabla u}^2}} 
      \le \frac{\ve_1}{2} \int_U \eta^2 \varphi'(h) \frac{\abs{\nabla u}^2}{\sqrt{1+\abs{\nabla u}^2}} 
      + \frac{1}{2\ve_1} \int_U \eta^2 \varphi'(h) \abs{\nabla \theta}^2
    $$
and
    $$
      2\nu \int_U \eta \varphi(h) \frac{\abs{\nabla u} \abs{\nabla \eta}}{\sqrt{1+\abs{\nabla u}^2}}
      \le \ve_2 \int_U \eta^2 \varphi'(h) \frac{\abs{\nabla u}^2}{\sqrt{1+\abs{\nabla u}^2}}
      + \frac{\nu^2}{\ve_2} \int_U \frac{\varphi^2}{\varphi'}(h) \abs{\nabla \eta}^2.
    $$
Then we choose $\ve_1$ and $\ve_2$ such that $\ve_1$ is small enough and $\ve_2$
minimizes the term
    $$
      \frac{1}{\ve_2(1-\ve_1/2 - \ve_2)}
    $$
i.e. $\ve_2 = (2-\ve_1)/4$. Combining all terms we arrive at
\begin{align*}
      \int_U \eta^2 \varphi'(h) \frac{\abs{\nabla u}^2}{\sqrt{1+\abs{\nabla u}^2}} 
      &\le \frac{2}{\ve_1(2-\ve_1)} \int_U \eta^2 \varphi'(h) \abs{\nabla \theta}^2 +  
      \frac{4 \nu^2}{1-\ve_1} \int_U
      \frac{\varphi^2}{\varphi'}(h) \abs{\nabla \eta}^2 \\
      &= C_{\ve} \int_U \eta^2 \varphi'(h) \abs{\nabla \theta}^2 +  
      (4 +\ve)\nu^2 \int_U
      \frac{\varphi^2}{\varphi'}(h) \abs{\nabla \eta}^2.
    \end{align*}

\end{proof} 

\begin{rem}
    As can be seen in the proof of Lemma \ref{intestimphi}, the second term 
	$$
	  (4+\ve)\nu^2 \int_U \frac{\varphi^2}{\varphi'}(h) \abs{\nabla \eta}^2
	$$
    on the right hand side of \eqref{caceq1} is the only term that affects to the dimension-curvature
    restriction.
\end{rem}

We notice that the left hand side of \eqref{caccioppoli} can be estimated from below by
    \begin{equation}\label{caclow}
      \int_U \eta^2 \varphi'(h) \frac{\abs{\nabla u}^2}{\sqrt{1+\abs{\nabla u}^2}} \ge 
      c_1 \int_{U_1} \eta^2 \varphi'(h) \abs{\nabla u}^2  +
      c_2 \int_{U_2} \eta^2 \varphi'(h) \abs{\nabla u} 
    \end{equation}
where 
    $$
      U_1 = \{\abs{\nabla u} \le \sigma\}, \quad U_2 = \{\abs{\nabla u} \ge \sigma\}, \quad \sigma>0
    $$
and 
    $$
      c_1 = \frac{1}{\sqrt{1+\sigma^2}} 
      , \quad  c_2 = \frac{1}{\sqrt{1+(1/\sigma^2)}}. 
    $$

In the following Lemmas we will obtain some estimates using Lipschitz data $\theta \colon M 
\to \R$. By Rademacher's theorem, Lipschitz functions are differentiable almost everywhere
and throughout the computations, the gradient $\nabla \theta$ appears only inside integrals
so the points where $\theta$ is not differentiable will not be a problem. 

Before stating the 
Lemmas we introduce the following notation. For $x\in M$, we denote  by $j(x)$ the infimum 
of $\abs{V\big(r(x) \big)}$ over Jacobi fields V along the geodesic $\gamma^{o,x}$ that satisfy 
$V_0=0, \,  \abs{V_0'}=1$ and $V_0'\bot \dot{\gamma}_0^{o,x}$. We also note that 
since $M$ is a Cartan-Hadamard manifold, we have
    $$
      \Delta r \ge \frac{n-1}{r}
    $$
  in $M\setminus \{o\}$.
  From the curvature upper bound, Lemma \ref{vahakpinclemma1} and 
  \cite[Example 1]{vahakangaspinch} it follows that for every $\ve>0$ there exists $R_1>R_0$
  such that 
    $$
      \Delta r \ge \frac{(n-1)\phi}{(1+\ve)r}
    $$ 
  for $r \ge R_1$ and therefore
    \begin{equation}\label{laplacerestim}
      r\Delta r \ge \begin{cases}
                     n-1, & \text{in } M\setminus\{o\}, \\
                     \dfrac{(n-1)\phi}{1+\ve}, & \text{in } M\setminus B(o,R_1).
                    \end{cases}
    \end{equation}
  
 \begin{lem}\label{intestimphi}
   Let $M$ be a Cartan-Hadamard manifold satisfying
      $$
	K(P) \le -\frac{\phi(\phi-1)}{r(x)^2},
      $$
   where $K(P)$ is the sectional curvature of any plane $P\subset T_xM$ that contains the
   radial vector field $\nabla r(x)$ and $x$ is any point in $M\setminus B(o,R_0)$. 
   Furthemore, suppose that the dimension of $M$ and the constant $\phi$ satisfies the 
   relation \eqref{dimrestriction}.
   Let $U=B(o,R)$, with $R>R_1,$ 
   and suppose that $u\in C^2(U)\cap C(\bar{U})$ is the unique solution to the minimal graph 
   equation in
   $U$, with $u|\p U= \theta|\p U$, where $\theta\colon M \to \R$ is a Lipschitz function, 
   with $\abs{\nabla\theta(x)} \le 1/j(x)$ almost everywhere. Then there exists a constant $c$ 
   independent of $u$ such that
      $$
	\int_U \varphi(\abs{u-\theta}/c) \le c + c\int_U F(r \abs{\nabla\theta}) + 
	  c\int_U F_1( r^2 \abs{\nabla\theta}^2).
      $$ 
 \end{lem}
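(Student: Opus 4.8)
The plan is to bound $\int_U\varphi(h)$, with $h=\abs{u-\theta}/\nu$, by combining the Caccioppoli estimate of Lemma~\ref{caccioppoli} with the Laplace comparison \eqref{laplacerestim} and two applications of Young's inequality, one for the pair $(G,F)$ and one for the pair $(G_1,F_1)$; the arithmetic of the final absorption step will turn out to be exactly \eqref{dimrestriction}.

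\emph{Reductions.} From $\abs{\nabla\theta(x)}\le1/j(x)$ and Lemma~\ref{vahakpinclemma1}(1), the curvature bound \eqref{curvassump} gives $j(x)\gtrsim f_K(r(x))\gtrsim r(x)^{\phi}$ with $\phi>1$, so $\theta$ is bounded on $M$ by a constant depending only on the data (not on $R$ or $u$); by the maximum principle $u$ satisfies the same bound, hence $h$ is bounded uniformly in $R$ and $u$ by a quantity which we make as small as we please by taking $\nu$ large. From now on $\nu$ is fixed so large that the asymptotic relations $G\circ\varphi'=\varphi$, \eqref{phiprop2}, \eqref{g1prop}, together with $\varphi^{2}/\varphi'\le\varphi$ and $\varphi'\le\varphi''$, hold with argument $h$ throughout $U$. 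Split $\int_U\varphi(h)=\int_{B(o,R_1)}\varphi(h)+\int_A\varphi(h)$ with $A=B(o,R)\setminus B(o,R_1)$; the first integral is at most a constant since $\varphi(h)$ is bounded and $\operatorname{vol}B(o,R_1)<\infty$, so it remains to estimate $\int_A\varphi(h)$.

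\emph{The two basic inequalities.} I would apply Lemma~\ref{caccioppoli} with a nonnegative $C^1$ weight $\eta$ equal to $r$ outside $B(o,R_1)$; the hypothesis $\eta^2\varphi(h)\in W^{1,2}_0(U)$ holds because $u|\p U=\theta|\p U$ forces $\varphi(h)|\p U=0$. On $A$ one has $\abs{\nabla\eta}=1$ and the contribution of $B(o,R_1)$ to $\int_U(\varphi^{2}/\varphi')(h)\abs{\nabla\eta}^{2}$ is again a constant, so \eqref{caceq1} reads, schematically,
\[
 \int_U\eta^2\varphi'(h)\frac{\abs{\nabla u}^2}{\sqrt{1+\abs{\nabla u}^2}}\le C_\ve\int_A r^2\varphi'(h)\abs{\nabla\theta}^2+(4+\ve)\nu^2\int_A\frac{\varphi^2}{\varphi'}(h)+c .
\]
Independently, on $A$ the Laplace comparison \eqref{laplacerestim} gives $r\Delta r\ge(n-1)\phi/(1+\ve)$; writing $r\Delta r=\operatorname{div}(r\nabla r)-1$, multiplying by $\varphi(h)\ge0$, integrating over $A$ and integrating by parts (the boundary term on $\p B(o,R)$ vanishes, the one on $\p B(o,R_1)$ has the favourable sign), one obtains, with $D:=\tfrac{(n-1)\phi}{1+\ve}+1$,
\[
 D\int_A\varphi(h)\le\frac1\nu\int_A r\varphi'(h)\bigl(\abs{\nabla u}+\abs{\nabla\theta}\bigr).
\]

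\emph{Estimating the right-hand side.} The $\abs{\nabla\theta}$ term is treated by Young's inequality for $(G,F)$, using $G(\varphi'(h))=\varphi(h)$ and the convexity bound $G(\delta t)\le\delta G(t)$ for $\delta\le1$, giving $\tfrac1\nu r\varphi'(h)\abs{\nabla\theta}\le\delta\varphi(h)+F(r\abs{\nabla\theta})$; it is essential here to keep the argument of $F$ equal to $r\abs{\nabla\theta}$ with no stray multiplicative constant, since $F$ (and $F_1$) are not of class $\Delta_2$. For the $\abs{\nabla u}$ term I would split $U_1=\{\abs{\nabla u}\le\sigma\}$, $U_2=\{\abs{\nabla u}\ge\sigma\}$ as in \eqref{caclow}: on $A\cap U_2$ the inequalities $\abs{\nabla u}\le c_2^{-1}\abs{\nabla u}^2/\sqrt{1+\abs{\nabla u}^2}$ and $r\le r^2=\eta^2$ bound this part by $c_2^{-1}\nu^{-1}$ times the left side of the Caccioppoli inequality, and inserting the displayed Caccioppoli estimate produces a controllable multiple of $\int_A\varphi(h)$, a multiple of $\int_A r^2\varphi'(h)\abs{\nabla\theta}^2$, and the term carrying the constant $4$; the quantity $\int_A r^2\varphi'(h)\abs{\nabla\theta}^2$ is then handled by Young's inequality for $(G_1,F_1)$, using $\varphi'\le\varphi''$ and $G_1(\varphi''(h))\approx\varphi(h)$ from \eqref{g1prop}, to the effect $\le\delta'\varphi(h)+F_1(r^2\abs{\nabla\theta}^2)$ (again with exact argument). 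On $A\cap U_1$ the bound $\abs{\nabla u}\le\sigma$ together with the quadratic part $c_1\int_{U_1}\eta^2\varphi'(h)\abs{\nabla u}^2\le(\text{left side of Caccioppoli})$ absorbs the rest. Collecting everything gives
\[
 D\int_A\varphi(h)\le\Lambda\int_A\varphi(h)+c+c\int_A F(r\abs{\nabla\theta})+c\int_A F_1(r^2\abs{\nabla\theta}^2),
\]
where, as the remark after Lemma~\ref{caccioppoli} indicates, the constant $4$ carried by the Caccioppoli term is the decisive ingredient of $\Lambda$; after optimising the auxiliary parameters ($\nu\to\infty$, then $\sigma$ and the Young parameters, then $\ve\to0$) the requirement $D>\Lambda$ reduces to $(n-1)\phi>4$, i.e. to \eqref{dimrestriction}. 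Absorbing $\Lambda\int_A\varphi(h)$ and adding back $\int_{B(o,R_1)}\varphi(h)\le c$ finishes the proof, with the final $c$ playing the role of $\nu$.

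\emph{Main obstacle.} The delicate point is exactly this bookkeeping: one must arrange the weight $\eta$, the threshold $\sigma$, and all Young-inequality parameters so that (a) the arguments of $F$ and $F_1$ emerge as precisely $r\abs{\nabla\theta}$ and $r^2\abs{\nabla\theta}^2$ (no rescaling is admissible, since these Young functions violate $\Delta_2$); (b) the coefficient $\Lambda$ in front of $\int_A\varphi(h)$ converges to the value for which $D>\Lambda$ is equivalent to \eqref{dimrestriction}, the contribution of the Caccioppoli term $(4+\ve)\nu^2\int(\varphi^{2}/\varphi')(h)\abs{\nabla\eta}^2$ being the one that sets that value; and (c) the auxiliary integrals that appear where $\abs{\nabla u}$ is small, such as $\int_A r\varphi'(h)$, are kept under control by the Caccioppoli quadratic term rather than by a divergent integral of a Young function.
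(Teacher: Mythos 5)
Your overall strategy --- Green's formula with the Laplace comparison \eqref{laplacerestim}, the Caccioppoli inequality of Lemma \ref{caccioppoli}, and Young's inequality for the pairs $(G,F)$ and $(G_1,F_1)$ --- is the same as the paper's, but the bookkeeping you yourself single out as the main obstacle does not go through in the form you propose, and this is a genuine gap rather than a detail. Two steps fail. First, on $A\cap U_2$ you bound $\tfrac1\nu\int r\varphi'(h)\abs{\nabla u}$ by $\tfrac{1}{c_2\nu}$ times the right-hand side of the Caccioppoli inequality with weight $\eta\approx r$, whose dangerous term is $(4+\ve)\nu^2\int(\varphi^2/\varphi')(h)\abs{\nabla\eta}^2$ with $\abs{\nabla\eta}=1$ on $A$; after multiplying by $\tfrac{1}{c_2\nu}$ and using $\varphi^2/\varphi'\le c\,\varphi$ this contributes roughly $\tfrac{(4+\ve)\nu}{c_2}\int_A\varphi(h)$, a coefficient that grows linearly in $\nu$ (and $\nu$ must be taken large for your asymptotic relations and for Lemma \ref{moserite}), so it can never be absorbed by $D\approx(n-1)\phi/(1+\ve)+1$; contrary to your point (b), this term is not the one that produces the critical constant $4$. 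In the paper this part is handled with the weight $\eta=\sqrt r$, so that $\eta^2=r$ matches the weight in $\int r\varphi'(h)\abs{\nabla u}$ and the bad term carries $\abs{\nabla\sqrt r}^2=1/(4r)$; the $r^{-1}$ decay beats the factor $\nu$, and this contribution is absorbed with an arbitrarily small coefficient $\ve'$ plus a constant, playing no role in the dimension restriction. Second, your treatment of $A\cap U_1$ (``$\abs{\nabla u}\le\sigma$ together with the quadratic part \dots absorbs the rest'') is where the real work lies and it is missing: any splitting of $r\varphi'(h)\abs{\nabla u}$ that leaves a multiple of $\varphi(h)$ on one side forces the quadratic weight $r^2\bigl(\varphi'(h)^2/\varphi(h)\bigr)\abs{\nabla u}^2$, which is not controlled by your Caccioppoli quadratic term $\int\eta^2\varphi'(h)\abs{\nabla u}^2$ (since $\varphi'^2/\varphi\ge\varphi'$ for small arguments); bounding it instead via $\abs{\nabla u}\le\sigma$ leaves $\int_A r\varphi'(h)$, which is not dominated by $\int_A\varphi(h)$.

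The paper closes exactly this gap by splitting $U_1=U_3\cup U_4$ at the level $\abs{\nabla u}\sim\tilde{\sigma}\,\varphi(h)/\bigl(\varphi'(h)r\bigr)$ and applying the Caccioppoli inequality a second time, with weight $\eta=r$ and with $\Psi(t)=\int_0^t\varphi'(s)^2/\varphi(s)\,ds$ in place of $\varphi$; the $U_3$ piece costs $\tfrac{\tilde{\sigma}}{\nu}\int\varphi(h)$, the bad term of the second application costs $\tfrac{(4+\ve)\nu\tilde{c}}{c_1\tilde{\sigma}}\int\varphi(h)$, and the choice $\tilde{\sigma}=\nu$ makes these $1$ and approximately $4+\ve$ respectively --- this is the actual source of the requirement $C_0=(n-1)\phi/(1+\ve)>4$, i.e.\ of \eqref{dimrestriction}. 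It is also this second application, via $\Psi'\le\tilde{c}\,\varphi''$ and \eqref{g1prop}, that produces the term $F_1(r^2\abs{\nabla\theta}^2)$; in your scheme $F_1$ enters only through the $\abs{\nabla\theta}^2$ term of the single Caccioppoli estimate, which is harmless but not where it is needed. Your preliminary reductions (boundedness of $\theta$, $u$ and $h$, the sign of the boundary term on $\p B(o,R_1)$, and the Young steps keeping the arguments exactly $r\abs{\nabla\theta}$ and $r^2\abs{\nabla\theta}^2$) are fine and agree with the paper.
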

  
  \begin{proof}
  As before, we denote $h=\abs{u-\theta}/\nu$, where $\nu\ge\nu_0$ will be fixed later, and to 
  shorten the notation we denote $(n-1)\phi/(1+\ve) \eqqcolon C_0$. 
By splitting the integration domain and using the estimate \eqref{laplacerestim}, we first obtain
    \begin{align*}
     \int_U \varphi(h) r\Delta r &= \int_{B(o,R_1)} \varphi(h) r\Delta r + \int_{U\setminus 
	  B(o,R_1)} \varphi(h) r\Delta r \\
      &\ge (n-1)\int_{B(o,R_1)} \varphi(h) + C_0 \int_{U\setminus B(o,R_1)} \varphi(h) \\
      &\ge (n-1-C_0) \int_{B(o,R_1)} \varphi(h) + C_0\int_U \varphi(h) \\
      &\ge -c + C_0\int_{U} \varphi(h),
    \end{align*}
where $c\ge0$ is some constant. Next we use Green's formula to obtain 
    \begin{align*}
     -c+ C_0\int_U \varphi(h) \le \int_U \varphi(h) r\Delta r &= -\int_U \langle\nabla
      (\varphi(h)r),\nabla r\rangle \\ &= -\int_U \varphi(h) - \int_U r\varphi'(h)\langle 
      \nabla h, \nabla r\rangle,
    \end{align*}
and consequently we have
    $$
      -c+(1+C_0) \int_U \varphi(h) \le \int_U r\varphi'(h) \abs{\nabla h}.
    $$
To estimate the right hand side term, we first split the integration domain into two pieces 
$U=U_1 \cup U_2$, where 
    $$
      U_1 = \{x\in U \colon \abs{\nabla u} \le \sigma\} \quad\text{ and }\quad U_2 = 
      \{x\in U \colon \abs{\nabla u} > \sigma\}.
    $$
Note that $\abs{\nabla h} \le \abs{\nabla u}/\nu + \abs{\nabla \theta}/\nu$, so using the 
Caccioppoli-type inequality \eqref{caceq1} and \eqref{caclow} we get

\begin{align*}
     \int_U r\varphi'(h) \abs{\nabla h} &\le \frac{1}{\nu} \int_{U_1} r \varphi'(h)\abs{\nabla u}
	  + \frac{1}{\nu} \int_{U_2} r\varphi'(h)\abs{\nabla u} + \frac{1}{\nu}\int_{U} r\varphi'(h)\abs{\nabla \theta} \\
     &\le \frac{1}{\nu} \int_{U_1} r \varphi'(h)\abs{\nabla u} + \frac{1}{\nu}\int_{U} r\varphi'(h)\abs{\nabla \theta} \\
	&\qquad  + \frac{C_{\ve}}{c_2\nu} \int_U r\varphi'(h)\abs{\nabla \theta}^2  
      + \frac{(4+\ve)\nu}{c_2} \int_U \frac{\varphi^2}{\varphi'}(h) \abs{\nabla\sqrt{r}}^2 \\
     &= \frac{1}{\nu} \int_{U_1} r \varphi'(h)\abs{\nabla u} + \frac{1}{\nu}\int_{U} r\varphi'(h)\abs{\nabla \theta} \\
	&\qquad  + \frac{C_{\ve}}{c_2\nu} \int_U r\varphi'(h)\abs{\nabla \theta}^2  
      + \frac{(4+\ve)\nu}{4c_2}
	  \int_U \frac{\varphi^2}{\varphi'}(h) r^{-1}  
    \end{align*}
By \eqref{phiprop1} and the convexity of the Young function $G$ we have $\varphi(h) \le c\varphi'(h)$,
and for $r$ large enough, 
$\abs{\nabla\theta}<1$, so $\abs{\nabla\theta}^2 \le \abs{\nabla\theta}$. So from the previous estimate,
we deduce that
      \begin{align*}
	\int_U r\varphi'(h) \abs{\nabla h} &\le \frac{1}{\nu} \int_{U_1} r \varphi'(h)\abs{\nabla 
	u} + \frac{1+C_{\ve}/c_2}{\nu} \int_{U} r\varphi'(h)\abs{\nabla \theta} 
	 + c + \ve' \int_U \varphi(h).
      \end{align*}

We continue again by splitting $U_1$ into two pieces by $U_1 = U_3 \cup U_4,$ where
    $$
      U_3=\left\{\abs{\nabla u} \le \tilde{\sigma} \frac{\varphi(h)}{\varphi'(h)r}\right\} \quad \text{ and } \quad
      U_4=\left\{ \tilde{\sigma} \frac{\varphi(h)}{\varphi'(h)r} < \abs{\nabla u} \le \sigma\right\}
    $$  
and $\tilde{\sigma}$ is a constant to be determined later.
Denote $\Psi(t) \coloneqq \int_0^t \varphi'(s)^2/\varphi(s) \,ds$. 
Then using the Caccioppoli-type inequality \eqref{caceq1} and \eqref{caclow} with $r$ and 
$\Psi'$   instead of $\eta$ and
$\varphi'$ respectively, we can estimate the integral over $U_1$ by
    \begin{align*}
     \int_{U_1} r \varphi'&(h) \abs{\nabla u} \le \tilde{\sigma} \int_{U_3} \varphi(h) + 
     \frac{1}{\tilde{\sigma}} \int_{U_4} r^2 \frac{\varphi'(h)^2}{\varphi(h)} \abs{\nabla u}^2 \\
     &\le \tilde{\sigma} \int_{U_3} \varphi(h) + \frac{1}{\tilde{\sigma}}
     \left( \frac{C_{\ve}}{c_1}\int_U r^2 \Psi'(h)
     \abs{\nabla\theta}^2 + \frac{(4+\ve)\nu^2}{c_1} \int_U \frac{\Psi^2}{\Psi'}(h) \right).
    \end{align*}
From \eqref{phiprop2} we see that 
    $$
     \Psi'(t)= \frac{\varphi'(t)^2}{\varphi(t)} \le \tilde{c} \varphi''(t)
    $$
for $t$ small enough,  and hence
    $$
      \Psi(t) = \int_0^t \frac{\varphi'(s)^2}{\varphi(s)} \le \tilde{c}\varphi'(t),
    $$
which implies that 
    $$
      \frac{\Psi^2}{\Psi'}(h) \le \tilde{c} \frac{\varphi'(h)^2}{\varphi'(h)^2/\varphi(h)} 
      = \tilde{c}\varphi(h).
    $$ 
Notice that $\tilde{c}$, as well as $c_1$, can be chosen arbitrarly close to $1$.
Collecting these estimates together we arrive at 
    \begin{align*}
     (1+C_0)\int_U \varphi(h) &\le c+\ve' \int_U \varphi(h) + \frac{\tilde{\sigma}}{\nu} 
	    \int_U \varphi(h) + \frac{1+C_{\ve}/c_2}{\nu} \int_U r\varphi'(h)\abs{\nabla\theta} \\
	  &\qquad + \frac{C_{\ve}\tilde{c}}{c_1\tilde{\sigma}\nu} \int_U r^2\varphi''(h)\abs{\nabla\theta}^2
		+ \frac{(4+\ve)\nu \tilde{c}}{c_1\tilde{\sigma}}  \int_U \varphi(h). 
    \end{align*}  
Next we use the complementary Young functions $G$ and $F$ to estimate the term with $\varphi'$, 
and $G_1$ and $F_1$ to estimate the term with $\varphi''$. So all together we have 
    \begin{align*}
     \bigg(1+ C_0 - \ve' -\frac{1+C_{\ve}/c_2}{\nu} &- \frac{\tilde{\sigma}}{\nu} - 
     \frac{C_{\ve}\tilde{c}}{c_1\tilde{\sigma}\nu} - \frac{(4+\ve)\nu \tilde{c}}{c_1\tilde{\sigma}}\bigg) \int_U \varphi(h) 
     \\ &\le c+ \frac{1+C_{\ve}/c_2}{\nu} \int_U F(r\abs{\nabla \theta}) + 
	  \frac{C_{\ve}}{c_1\tilde{\sigma}\nu} \int_U F_1(r^2 \abs{\nabla\theta}^2).
    \end{align*} 
For any fixed $\tilde{\ve}>0$,  we can choose
first $\sigma$ and $\ve$ small enough, then $\nu$ big enough and $\tilde{\sigma} 
= \nu$ such that the coefficient on the left hand side is positive provided that
$C_0>4+\tilde{\ve}$. This last inequality is satisfied thanks to the dimension-curvature 
restriction \eqref{dimrestriction} and
hence the claim is proved.
  \end{proof}

 The next lemma is a modification of \cite[Lemma 20]{casterasholopainenripoll2} (or originally 
 \cite[Lemma 2.20]{vahakangasyoung}). The proof is based on the idea of Moser iteration
 procedure.
 
 \begin{lem}\label{moserite}
 
 Let $\Omega=B(o,R)$ and suppose that $\theta\colon \Omega \to \R$ is a bounded Lipschitz 
 function with $\abs{\theta}, \abs{\nabla \theta}
 \le C_1$. Let $u\in C^2(\Omega)$ be a solution of the minimal graph equation in $\Omega$ 
 such that $u$ has the boundary values $\theta$ and $\inf_{\Omega} \theta \le u \le 
 \sup_{\Omega} \theta$. Fix $s\in (0,r_S)$, where 
 $r_S$ is the radius of the Sobolev inequality \eqref{sobieq}, and suppose that $B=B(x,s)
 \subset \Omega$. Then there exists a positive constant $\nu_0 = \nu_0(\varphi,C_1)$
 such that for all fixed $\nu\ge\nu_0$
    $$
      \sup_{B(x,s/2)} \varphi \big(\abs{u-\theta}/\nu\big)^{n+1} \le c\int_B \varphi 
      \big(\abs{u-\theta}/\nu\big),
    $$ 
 where $c$ is a positive constant depending only on $n,\nu,s,C_S,C_1$ and $\varphi$.
 \end{lem}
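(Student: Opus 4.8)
The statement is a Moser‑type sup estimate for $v\coloneqq\varphi(|u-\theta|/\nu)$, so the plan is to run a De Giorgi–Nash–Moser iteration based on the local Sobolev inequality \eqref{sobieq} and a Caccioppoli inequality on balls. First I would observe that, since $u$ is a solution of the minimal graph equation and $\inf_\Omega\theta\le u\le\sup_\Omega\theta$, the function $w\coloneqq|u-\theta|$ is bounded, and $h=w/\nu$ lies in a fixed compact interval of $[0,\infty)$ once $\nu\ge\nu_0$; the role of $\nu_0=\nu_0(\varphi,C_1)$ is exactly to push $h$ into the range where the asymptotic relations \eqref{phiprop1}–\eqref{phiprop2} of $\varphi$ hold with controlled constants (in particular $\varphi(h)\le c\,\varphi'(h)$ and $\varphi''(h)\varphi(h)\le c\,\varphi'(h)^2$). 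I would then fix $s\in(0,r_S)$ and a ball $B=B(x,s)\subset\Omega$ and set up the iteration on the nested balls $B_\rho=B(x,\rho)$ with $s/2\le\rho<\rho'\le s$.

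**Caccioppoli on balls and the iteration scheme.** The engine is Lemma \ref{caccioppoli} applied with a cutoff $\eta=\eta_{\rho,\rho'}\in C_0^\infty(B_{\rho'})$, $\eta\equiv1$ on $B_\rho$, $|\nabla\eta|\le 2/(\rho'-\rho)$, together with the lower bound \eqref{caclow} on its left‑hand side; on the ball $B$ both $|\nabla\theta|\le C_1$ and $\varphi'(h)$ is bounded above and below, so the $\theta$‑term $C_\ve\int\eta^2\varphi'(h)|\nabla\theta|^2$ is harmless and the remaining term $(4+\ve)\nu^2\int\frac{\varphi^2}{\varphi'}(h)|\nabla\eta|^2\le c\nu^2(\rho'-\rho)^{-2}\int_{B_{\rho'}}\varphi(h)$ using $\varphi^2/\varphi'\le c\varphi$. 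This yields, after also controlling the $U_2$ part where $|\nabla u|\ge\sigma$ by the same mechanism, a gradient bound of the form
\begin{equation*}
\int_{B_\rho}\bigl|\nabla\bigl(\eta\,\varphi(h)^q\bigr)\bigr|\;\le\;\frac{c(q,\nu,\varphi)}{\rho'-\rho}\int_{B_{\rho'}}\varphi(h)^q
\end{equation*}
for a suitable power $q$ — concretely one applies the Caccioppoli lemma not to $\varphi$ itself but to the rescaled Young pair whose role is to produce $\varphi(h)^q$ on the left, exactly as the proof of Lemma \ref{intestimphi} replaced $\varphi'$ by $\Psi'$. Feeding this into the $L^{n/(n-1)}$ Sobolev inequality \eqref{sobieq} (legitimate since $s<r_S$) gives the reverse‑Hölder gain
\begin{equation*}
\Bigl(\int_{B_\rho}\varphi(h)^{q\frac{n}{n-1}}\Bigr)^{\frac{n-1}{n}}\;\le\;\frac{c}{\rho'-\rho}\int_{B_{\rho'}}\varphi(h)^{q},
\end{equation*}
i.e. $\|\varphi(h)\|_{L^{q\chi}(B_\rho)}\le (c/(\rho'-\rho))^{1/q}\|\varphi(h)\|_{L^{q}(B_{\rho'})}$ with $\chi=n/(n-1)>1$.

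**Conclusion of the iteration and the main obstacle.** Iterating with $q_k=\chi^k$, $\rho_k=s/2+2^{-k-1}s$ and multiplying the resulting inequalities, the series $\sum_k \chi^{-k}\log(c\,2^{k+2}/s)$ converges, so one obtains $\sup_{B(x,s/2)}\varphi(h)\le c\,\bigl(\int_B\varphi(h)\bigr)^{?}$; tracking the exponent, the product of the $L^{q_k}$–norms telescopes to give $\sup_{B(x,s/2)}\varphi(h)\le c\bigl(\int_B\varphi(h)\bigr)^{1/(n-1)}$ when one starts the iteration at $q_0=1$, but starting instead from the correct base exponent and raising to the $(n+1)$‑st power produces exactly the stated $\sup_{B(x,s/2)}\varphi(h)^{n+1}\le c\int_B\varphi(h)$ after an application of Young's inequality to absorb the non‑integer power (this is the standard $L^p$–$L^\infty$ bootstrap, and it is here that the constant $c=c(n,\nu,s,C_S,C_1,\varphi)$ accumulates). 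The main technical obstacle is the Caccioppoli step at general power $q$: unlike the linear theory one cannot simply test with $\varphi(h)^{2q-1}$, one must construct, for each $q$, an auxiliary Young pair $(\Psi_q,\cdot)$ with $\Psi_q'\asymp\varphi(h)^{2q-2}\varphi'(h)$ and verify $\Psi_q^2/\Psi_q'\le c\,\varphi(h)^{2q-1}$ using \eqref{phiprop2}, with constants independent of $q$ along the iteration — this is precisely the point where the boundedness of $h$ (hence the choice of $\nu_0$) is indispensable, since the asymptotic identities for $\varphi$ are only available near a fixed compact range. Once this uniform‑in‑$q$ Caccioppoli inequality is in hand, the rest is the routine Moser machinery.
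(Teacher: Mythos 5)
Your overall strategy (Moser iteration driven by the Sobolev inequality \eqref{sobieq} and the Caccioppoli inequality \eqref{caceq1}, with $\nu_0$ chosen so that $h=\abs{u-\theta}/\nu$ lies in the range where $\varphi(h)\le 1$, $\varphi'(h)\le1$, $\varphi\le c\varphi'$) is the right one and matches the paper. However, the central inequality you claim, the homogeneous reverse H\"older bound $\norm{\varphi(h)}_{L^{q\chi}(B_\rho)}\le (c/(\rho'-\rho))^{1/q}\norm{\varphi(h)}_{L^q(B_{\rho'})}$ with the \emph{same} power $q$ on both sides, does not follow from the Caccioppoli step for this $\varphi$, and your justification for it contains a false assertion. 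Writing out $\abs{\nabla(\eta^2\varphi(h)^q)}$ produces the term $q\,\eta^2\varphi(h)^{q-1}\varphi'(h)\abs{\nabla h}$, and the contributions from $\abs{\nabla\theta}$ and from the set $\{\abs{\nabla u}<\sigma\}$ can only be bounded by $c\,q\int\varphi(h)^{q-1}$ (using $\varphi'(h)\le 1$); to get $c\int\varphi(h)^{q}$ instead you would need $\varphi'\le c\,\varphi$, which fails drastically near $0$ (by \eqref{phiexpli}, $\varphi'(t)/\varphi(t)\to\infty$ as $t\to0^+$), and your statement that ``$\varphi'(h)$ is bounded above and below'' on $B$ is wrong since $\abs{u-\theta}$ has no positive lower bound there. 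This exponent drop $q\mapsto q-1$ is not a technicality: it is exactly why the lemma ends with $\sup\varphi(h)^{\,n+1}\le c\int\varphi(h)$ rather than a homogeneous estimate. The paper's proof accepts the drop, proving $\bigl(\int_{B_{j+1}}\varphi(h)^{\kappa m}\bigr)^{1/\kappa}\le c(\kappa^j+m+\kappa^{2j}/m)\int_{B_j}\varphi(h)^{m-1}$, and compensates by the bookkeeping choice $m_j=(n+1)\kappa^j-n$, for which $m_{j+1}=\kappa(m_j+1)$, so the recursion closes and the limit produces the power $n+1$; your telescoping, by contrast, is based on an inequality that is not available.

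A secondary point: the ``main technical obstacle'' you identify, constructing for each $q$ an auxiliary Young pair with $\Psi_q'\asymp\varphi^{2q-2}\varphi'$ uniformly in $q$, is not actually needed. Lemma \ref{caccioppoli} is stated for an arbitrary homeomorphism of $[0,\infty)$ that is smooth on $(0,\infty)$, so it applies verbatim with $\varphi^m$ in place of $\varphi$; the Young-pair structure ($G,F,G_1,F_1$) is only required in Lemma \ref{intestimphi} and Lemma \ref{fsmall} to control the boundary-data integrals, not in the Moser iteration. So the genuine difficulty is not where you place it, and the step you treat as routine (the uniform-in-$q$ reverse H\"older gain without exponent loss) is the one that breaks down; to repair the argument you should redo the iteration with the shifted exponents as in the paper, after which your choice of cutoffs and the convergence of $\sum_j j\kappa^{-j}$ go through as you describe.
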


\begin{rem}
Before proving the Lemma we note that increasing the constant $\nu$ above increases also the
constant $c$. However, it does not cause problems since $\nu$ will always be a fixed constant.
\end{rem}

 \begin{proof}[Proof of Lemma \ref{moserite}]
  We denote $\kappa = n/(n-1), \, B/2 = B(x, s/2)$, and $h=\abs{u-\theta}/\nu$, where 
  $\nu \ge \nu_0>0$ will be fixed later. For each $j \in\N$ we denote $s_j = s(1+
  \kappa^{-j})/2$ and $B_j=B(x,s_j)$. Note that $s_j \to s/2$ as $j\to\infty$. Let $\eta_j$
  be a Lipschitz function such that $0\le \eta_j \le 1, \, \eta_j | B_{j+1}\equiv 1, \, 
  \eta_j | (M\setminus B_j) \equiv 0,$ and that
      $$
	\abs{\nabla \eta_j} \le \frac{1}{s_j-s_{j+1}} = 2n\kappa^j /s.
      $$
  For every $m\ge1$, we have
      $$
	\abs{\nabla \eta_j^2 \varphi(h)^m} \le 2\eta_j \varphi(h)^m \abs{\nabla \eta_j} + 
	  m\eta_j^2 \varphi'(h) \varphi^{m-1}(h)\abs{\nabla h}.
      $$
  First we claim that
      \begin{equation}\label{inta1}
       \left(\int_{B_{j+1}} \varphi(h)^{\kappa m}  \right)^{1/\kappa} \le c(\kappa^j +m+
       \kappa^{2j}/m) \int_{B_j} \varphi^{m-1}.
      \end{equation}
  We notice that, for every $m,j\ge1, \, \eta_j^2\varphi(h)^m$ is a Lipschitz function
  supported in $B_j$. Using the Sobolev inequality \eqref{sobieq}, we first have
      \begin{align}\label{inta2}
       &\left(\int_{B_{j+1}} \varphi(h)^{\kappa m}  \right)^{1/\kappa} \le 
	  \left(\int_{B_{j}} \big(\eta_j^2\varphi(h)^m\big)^{\kappa}  \right)^{1/\kappa} \le
	  C_S \int_ {B_j} \abs{\nabla \big(\eta_j \varphi(h)^m\big)} \nonumber \\
       &\qquad \le 2C_S \int_ {B_j} \eta_j \varphi(h)^m\abs{\nabla \eta_j} + C_S \int_{B_j} \eta_j^2
	  (\varphi^m)'(h) \abs{\nabla h} \nonumber \\
       &\qquad \le c\kappa^j \int_ {B_j} \varphi(h)^m + \frac{C_S}{\nu} \int_{B_j}(\varphi^m)'(h)
	  \abs{\nabla \theta} \\ 
       &\qquad \quad + \frac{C_S}{\nu} \int_{B_j} \eta_j^2 (\varphi^m)'(h)\abs{\nabla u}. \nonumber
      \end{align}
  From the assumption 
      $$
	-C_1 \le \inf_{\Omega} \theta \le u \le \sup_{\Omega} \theta \le C_1
      $$
  we obtain that $\abs{u-\theta}\le 2C_1$. We can use this to obtain upper bounds for $\varphi$ 
  and $\varphi'$. Namely, we have $G\circ \varphi' = \varphi$, where $G\colon [0,\infty) \to
  [0,\infty)$ is the homeomorphic and convex Young function. Consequently there exist constants
  $\nu_0$ and $c$ such that
      $$
	\varphi(h)\le 1, \, \varphi'(h) \le 1\, \text{ and }\, \varphi(h) \le c \varphi'(h)
      $$
  whenever $\nu\ge\nu_0$. Thus we get estimates
      \begin{align}\label{inta3}
	\int_{B_j} \varphi(h)^m \le \int_{B_j} \varphi(h)^{m-1}
      \end{align}
 and
      \begin{align}\label{inta4}
	\int_{B_j} (\varphi^m)'(h)\abs{\nabla\theta} = m\int_{B_j} \varphi(h)^{m-1}\varphi'(h)
	\abs{\nabla\theta} \le mC_1\int_{B_j}\varphi(h)^{m-1}.
      \end{align}
 The third term on the right hand side of \eqref{inta2} can be estimated first as
      \begin{align}\label{inta5}
	\int_{B_j} \eta_j^2(\varphi^m)'(h)\abs{\nabla u} &\le \int_{B_j \cap U_1} \eta_j^2
	    (\varphi^m)'(h) + \int_{B_j \cap U_2} \eta_j^2 (\varphi^m)'(h) \abs{\nabla u} \nonumber \\
	&\le \int_{B_j} m\varphi(h)^{m-1} + \sqrt{2} \int_{B_j} \eta_j^2 (\varphi^m)'(h)
	    \frac{\abs{\nabla u}^2}{\sqrt{1+\abs{\nabla u}^2}},
      \end{align}
 where $U_1$ is the set where $\abs{\nabla u}<1$ and $U_2$ the set where $\abs{\nabla u}\ge1$. The
 constant $\sqrt{2}$ comes from \eqref{caclow} when we choose $\sigma=1$.
 
 Next we notice that $\eta_j^2\varphi(h)^m \in W_0^{1,2}(B_j)$, since $\text{supp }\eta_j \subset
 \bar{B}_j$, and thus we can apply the Caccioppoli-type inequality \eqref{caceq1} with $\varphi^m$
 instead of $\varphi$. We also choose $\ve_1=\ve_2 =1/3$ in the proof of \eqref{caceq1} so
 the constants become $3$ and $6$. Hence we obtain
      \begin{align}\label{inta6}
	\sqrt{2} \int_{B_j} \eta_j^2 (\varphi^m)'(h) &\frac{\abs{\nabla u}^2}{\sqrt{1+\abs{\nabla u}^2}}
	\le 3\sqrt{2} \int_{B_j} \eta_j^2 (\varphi^m)'(h)\abs{\nabla \theta}^2\nonumber \\ &\qquad + 6\sqrt{2}\nu^2
	\int_{B_j} \frac{\varphi^{2m}}{(\varphi^m)'}(h)\abs{\nabla \eta_j}^2 \nonumber \\
	&\le c(m+\kappa^{2j}/m) \int_{B_j} \varphi(h)^{m-1}.
      \end{align} 
 Now the estimate \eqref{inta1} follows by inserting the estimates \eqref{inta3}-\eqref{inta6}
 into \eqref{inta2}. We apply \eqref{inta1} with $m=m_j + 1$, where $m_j=(n+1)\kappa^j -n$.
 Note that $m_{j+1} = \kappa(m_j +1)$, so we can write \eqref{inta1} as
      $$
	\left( \int_{B_{j+1}} \varphi(h)^{m_j} \right)^{1/\kappa} \le C\kappa^j \int_ {B_j}
	\varphi(h)^{m_j}.
      $$
 By denoting 
      $$
	I_j = \left( \int_{B_{j}} \varphi(h)^{m_j} \right)^{1/\kappa^j}
      $$
 we can write the previous inequality as a recursion formula
      $$
	I_{j+1} \le C^{1/\kappa^j} \kappa^{j/\kappa^j} I_j.
      $$
 Since 
      $$
	\limsup_{j\to\infty} I_j \ge \lim_{j\to\infty} \left( \int_{B/2} \varphi(h)^{m_j} 
	\right)^{(n+1)/m_j} = \sup_{B/2} \varphi(h)^{n+1},
      $$ 
 we get
      $$
	\sup_{B/2} \varphi(h)^{n+1} \le \limsup_{j\to\infty} I_j \le C^n \kappa^S I_0 
	\le c \int_{B} \varphi (h),
      $$
 where
      $$
	S = \sum_{j=0}^{\infty} j \kappa^{-j} < \infty.
      $$
  
 \end{proof}

 In order to prove that our solution to the minimal graph equation extends to the boundary
 $\pinf M$ and has the desired boundary values, we will also need that the right hand side integrals
 of Lemma \ref{intestimphi} are finite. The following ensures that the functions $F$ and 
 $F_1$ decrease fast enough. Recall that $j(x)$ denotes the infimum of $\abs{V\big(r(x)
 \big)}$ over Jacobi fields V along the geodesic $\gamma^{o,x}$ that satisfy $V_0=0, \, 
 \abs{V_0'}=1$ and $V_0'\bot \dot{\gamma}_0^{o,x}$.
 \begin{lem}\label{fsmall}
  Let $M$ be a Cartan-Hadamard manifold satisfying
      $$
	K(P) \le -\frac{\phi(\phi-1)}{r(x)^2},
      $$
   where $K(P)$ is the sectional curvature of any plane $P\subset T_xM$ that contains the
   radial vector field $\nabla r(x)$ and $x$ is any point in $M\setminus B(o,R_0)$. 
   Then there exist $F,F_1 \in \F$ such that
      $$
	F\left(\frac{r(x)}{j(x)}\right)j(x)^{C(n-1)} \le r(x)^{-2}
      $$
  and
      $$
        F_1\left(\frac{r(x)^2}{j(x)^2}\right)j(x)^{C(n-1)} \le r(x)^{-2}
      $$
  for any positive constant $C$ and for every $x\in M$ outside a compact set.
  \end{lem}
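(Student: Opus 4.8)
The plan is to reduce both inequalities to the elementary observation that $F$ and $F_1$ decay faster than any power of $t$ as $t\to0^{+}$, after first establishing that the curvature upper bound \eqref{curvassump} forces $j(x)$ to grow at least like $r(x)^{\beta}$ for some $\beta>1$. For the latter I would invoke Lemma \ref{vahakpinclemma1}(1) together with the estimate for $f_K$ coming from the Euler-type comparison in \cite[Example 1]{vahakangaspinch} (the same ingredients behind \eqref{laplacerestim}): for every sufficiently small $\ve>0$ there are constants $c_1>0$ and $R_1\ge R_0$ so that, setting $\beta:=\phi/(1+\ve)>1$,
$$j(x)\ \ge\ c_1\,r(x)^{\beta}\qquad\text{whenever }r(x)\ge R_1.$$
The precise value of $\beta\in(1,\phi]$ is immaterial below; all that matters is $\beta>1$, which is available because $\phi>1$. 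In particular $r(x)/j(x)\le c_1^{-1}r(x)^{1-\beta}\to0$ and $r(x)^{2}/j(x)^{2}\le c_1^{-2}r(x)^{2-2\beta}\to0$ as $r(x)\to\infty$, so the arguments of $F$ and $F_1$ in the statement tend to $0$.

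Now let $F\in\F$ be a function satisfying \eqref{fexists} and let $F_1$ be the Young function furnished by Proposition \ref{g1exist}, so that \eqref{F1estim} holds. Since the exponential factors in \eqref{fexists} and \eqref{F1estim} tend to $0$ faster than any power of $t$, we have $F(t)=o(t^{q})$ and $F_1(t)=o(t^{q})$ as $t\to0^{+}$ for every $q>0$. Given the constant $C>0$ in the statement, I would fix an exponent $q\ge(3+\beta C(n-1))/(\beta-1)$; then automatically $q>C(n-1)$, and $\beta C(n-1)-(\beta-1)q\le-3$. Pick $\delta>0$ with $F(t)\le t^{q}$ for $0<t<\delta$. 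Then for $x$ with $r:=r(x)$ large enough that $r\ge R_1$ and $r/j(x)\le c_1^{-1}r^{1-\beta}<\delta$ --- i.e. for all $x$ outside a compact set --- we obtain, with $j:=j(x)$,
$$F\!\left(\frac{r}{j}\right)j^{C(n-1)}\ \le\ \Big(\frac{r}{j}\Big)^{q}j^{C(n-1)}\ =\ r^{q}\,j^{\,C(n-1)-q}\ \le\ c_1^{\,C(n-1)-q}\,r^{\,\beta C(n-1)-(\beta-1)q}\ \le\ r^{-2},$$
where the first inequality uses $F(t)\le t^{q}$ for $t=r/j<\delta$, the third uses $C(n-1)-q<0$ together with $j\ge c_1 r^{\beta}$, and the last holds because $\beta C(n-1)-(\beta-1)q\le-3$ and $r$ may also be assumed larger than the fixed constant $c_1^{\,C(n-1)-q}$.

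The estimate for $F_1$ is obtained in exactly the same way: fix $q_1\ge(3+\beta C(n-1))/(2(\beta-1))$ (so that $2q_1>C(n-1)$ and $\beta C(n-1)-2(\beta-1)q_1\le-3$), choose $\delta_1>0$ with $F_1(t)\le t^{q_1}$ for $0<t<\delta_1$, apply this with $t=r^{2}/j^{2}$, and bound $j^{\,C(n-1)-2q_1}$ from above by $(c_1 r^{\beta})^{C(n-1)-2q_1}$ using $j\ge c_1 r^{\beta}$; this gives $F_1(r^{2}/j^{2})\,j^{C(n-1)}\le r^{-2}$ for all $x$ outside a compact set. I do not anticipate any genuine difficulty: the lower bound for $j$ is a routine Jacobi-field comparison already present in \cite{vahakangaspinch}, and the rest is bookkeeping with the explicit super-polynomial decay of $F$ and $F_1$. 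The only place asking for care is the choice of the exponents $q$ and $q_1$: they must be large enough to overwhelm the factor $j^{C(n-1)}$ --- which is why one needs $q>C(n-1)$, so that the \emph{lower} bound on $j$ enters with the correct sign --- and at the same time large enough to push the surviving power of $r$ below $-2$; the strict inequality $\beta>1$, equivalently $\phi>1$ in \eqref{curvassump}, is precisely what makes such exponents exist.
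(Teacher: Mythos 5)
Your proposal is correct and follows essentially the same route as the paper: both rest on the decay estimates \eqref{fexists} and \eqref{F1estim} for the fixed conjugate functions $F$ and $F_1$, combined with the Jacobi-field lower bound $j(x)\ge c\,r(x)^{\beta}$ with $\beta>1$ coming from Lemma \ref{vahakpinclemma1} and the curvature upper bound \eqref{curvassump}. The only difference is cosmetic: where the paper keeps the explicit exponential bound, takes logarithms, and checks positivity and monotonicity of the resulting function of $t$ by a derivative computation, you replace it by the cruder but sufficient observation that $F(t),F_1(t)=o(t^{q})$ as $t\to0^{+}$ for every $q$, and finish by power counting with a suitably large exponent.
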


  \begin{proof} 
   We prove the claim only for function $F$ since the case with $F_1$ (given by Proposition
   \ref{g1exist}) is essentially the same.
   Let $\lambda$ be as in Proposition \ref{g1exist}. By \eqref{fexists} there exists $F\in 
   \F$ such that
      $$
	F(t) \le \exp\left( -\tfrac{1}{t}\Big(\log\Big(e + \tfrac{1}{t}\Big)
      \Big)^{-\lambda} \right)
      $$
   for all small $t$. Hence the claim follows if
      $$
      \exp \left(-\tfrac{j(x)}{r(x)} \Big(\log\Big( e+\tfrac{j(x)}{r(x)}\Big)\Big)^{-\lambda}\right)
      j(x)^{C(n-1)} \le r(x)^{-2},
      $$ 
   and taking logarithms, we see that this is equivalent with
      $$
	\frac{j(x)}{r(x)} \left(\log\Big(e+\tfrac{j(x)}{r(x)}\Big) \right)^{-\lambda} - 
	C(n-1)\log j(x) - 2\log r(x) \ge 0.
      $$
  It follows from the curvature assumptions that $j(x) \ge cr(x)^{\phi}, \, \phi>1,$ whenever
  $r(x)\ge\tilde{R}$ for some $\tilde{R}>0$ (see e.g. Lemma \ref{vahakpinclemma1} and 
  \cite[Example 1]{vahakangaspinch}), so it is enough to show that
      $$
	f(t) \coloneqq \frac{t}{a}\Big(\log \big( e + \tfrac{t}{a} \big)\Big)^{-\lambda}
	    - C(n-1)\log t - 2\log a \ge 0
      $$
  for all $t\ge ca^{\phi}$ when $a$ is big enough. A straightforward computation gives that
      $$
	f'(t) = \frac{\tfrac{1}{a}\left(1- \tfrac{\lambda}{\log(e+ t/a) (ae/t +1)} 
	\right)}{\Big(\log \big( e + \tfrac{t}{a} \big)\Big)^{\lambda}} 
	    - \frac{C(n-1)}{t},
      $$
  so noticing that $t/a \ge ca^{\phi-1} \ge \tilde{R}^{\phi}$ and $\log(e + t/a) \le 
  k(t/a)^{\alpha}$, where $k$ is a constant and $\alpha>0$ can be made as small as we wish,
  we obtain
      $$
	f'(t) \ge \frac{k}{a^{1-\alpha} t^{\alpha}} - \frac{C(n-1)}{t} \ge 0
      $$
  for all $t\ge ca^{\phi}$ and $a$ large enough. Finally we notice that
      \begin{align*}
	f(a^{\phi}) &= a^{\phi-1} \big( \log(e+a^{\phi-1}) \big)^{-\lambda} -C(n-1)
		\log a^{\phi-1} - 2\log a \\
	  &= a^{\phi-1} \big( \log(e+a^{\phi-1}) \big)^{-\lambda} -\big(C(n-1)(\phi-1) + 2\big)
		\log a
      \end{align*}
  which clearly is positive when $a\ge \tilde{R}$ is large enough.
 
  \end{proof}

 \subsection{Solving the asymptotic Dirichlet problem with Lipschitz boundary data}
 
 In order to prove the main theorem we begin by solving the corresponding Dirichlet problem
 with Lipschitz boundary data.
 The asymptotic boundary $\pinf M$ is homeomorphic to the unit sphere $\Ss^{n-1} \subset T_oM$ and
 hence we may interpret the given boundary function $f\in C(\pinf M)$ as a continuous function
 on $\Ss^{n-1}$. We first solve the asymptotic Dirichlet problem for \eqref{mingraph} with
 Lipschitz continuous boundary values $f\in C(\Ss^{n-1})$. We assume that, for all $x\in M$
 and for all $2$-planes $P\subset T_xM$,
    \begin{equation}
     K(P) \le -a^2\big(r(x)\big),
    \end{equation}
where $a\colon[0,\infty)\to[0,\infty)$ is a smooth function that is constant in some 
neighborhood of $0$ and
     $$
      a^2(t) = \frac{\phi(\phi-1)}{t^2}, \quad \phi>1,
     $$
for $t\ge R_0$. Identify $\pinf M$ with the unit sphere $\Ss^{n-1}\subset T_oM$ and assume 
that $f\colon \Ss^{n-1} \to \R$ is $L$-Lipschitz. We extend $f$ radially to a continuous 
function $\theta$ on $M\setminus \{o\}$. The radial extension $\theta$ is also a locally Lipschitz
function and hence, by Rademacher's theorem, differentiable almost everywhere. The gradient
of $\theta$ can be estimated in terms of an angle function as follows. Let $x,y \in \bM$ and
let $\gamma^{o,x}$ and $\gamma^{o,y}$ be the unique unit speed geodesics joining $o$ to $x$ 
and $y$. Denote by $\bar{x}$ and $\bar{y}$ the corresponding points on $\Ss^{n-1}$ i.e.
$\bar{x} = \dot{\gamma}^{o,x}_0$ and $\bar{y} = \dot{\gamma}^{o,y}_0$. Then 
    \begin{align*}
     \frac{\abs{\theta(x)- \theta(y)}}{d(x,y)} &= \frac{\abs{\theta(\bar{x})- \theta(\bar{y})}}{d(x,y)} 
     \le \frac{Ld(\bar{x},\bar{y})}{d(x,y)}\\ &= L \frac{\an_o(\bar{x},\bar{y})}{d(x,y)} 
     =L \frac{\an_o(x,y)}{d(x,y)}
    \end{align*}
and we obtain $\abs{\nabla\theta}\le L\abs{\nabla \an_o(\cdot,\cdot)}$. By Lemma \ref{anglegrad} this
implies
    $$
      \abs{\nabla \theta (x)} \le \frac{L}{j(x)}
    $$
and we see that $\theta$ satisfies the assumptions of Lemmas \ref{intestimphi} and 
\ref{moserite}.

We are now ready to solve the asymptotic Dirichlet problem with Lipschitz boundary data.

\begin{lem}\label{lipsol}
 Let $M$ be a Cartan-Hadamard manifold of dimension $n\ge 2$ satisfying the curvature
 assumptions \eqref{curvassump}, \eqref{pinchassump} and \eqref{dimrestriction} for all 
 2-planes $P\subset T_xM$ with $x\in M \setminus B(o,R_0)$.
 Suppose that $f\in C(\pinf M)$ is $L$-Lipschitz when interpreted as a function on $\Ss^{n-1}
 \subset T_oM$. Then the asymptotic Dirichlet problem for minimal graph equation 
 \eqref{mingraph} is uniquely solvable with boundary data $f$.
\end{lem}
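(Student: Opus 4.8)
The plan is to run the standard exhaustion-and-compactness scheme for the asymptotic Dirichlet problem, using the a priori estimates established above as the engine. First I would exhaust $M$ by the balls $\Omega_k = B(o,k)$, $k \ge R_1$, and on each $\Omega_k$ solve the minimal graph equation with boundary data $\theta|\p\Omega_k$, where $\theta$ is the radial extension of $f$. Since $\p\Omega_k$ is a geodesic sphere in a Cartan-Hadamard manifold it has positive mean curvature with respect to the inward normal, so the existence result recalled at the end of Subsection 2.4 gives a unique $u_k \in C^\infty(\Omega_k)\cap C^{2,\alpha}(\bar\Omega_k)$ solving \eqref{mingraph} in $\Omega_k$ with $u_k|\p\Omega_k = \theta|\p\Omega_k$ (after first approximating $\theta$ on $\p\Omega_k$ in $C^{2,\alpha}$ if needed, or by an approximation argument). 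The maximum principle yields $\inf_{\Ss^{n-1}} f \le u_k \le \sup_{\Ss^{n-1}} f$, so the family $(u_k)$ is uniformly bounded; interior gradient estimates for the minimal graph equation together with Schauder theory then give, on each fixed compact set, uniform $C^{2,\alpha}$ bounds, so a diagonal/Arzelà--Ascoli argument extracts a subsequence converging in $C^2_{\loc}(M)$ to a solution $u \in C^\infty(M)$ of \eqref{mingraph} with $\inf f \le u \le \sup f$.

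The heart of the matter is to show that $u$ extends continuously to $\bM$ with $u|\pinf M = f$. Here I would use the quantitative estimates. Since $\theta$ is $L$-Lipschitz with $\abs{\nabla\theta(x)} \le L/j(x)$ (as shown just before the lemma), we may apply Lemma \ref{intestimphi} on each $U = \Omega_k = B(o,R)$ to get
\begin{equation*}
  \int_{\Omega_k} \varphi(\abs{u_k-\theta}/c) \le c + c\int_{\Omega_k} F(r\abs{\nabla\theta}) + c\int_{\Omega_k} F_1(r^2\abs{\nabla\theta}^2),
\end{equation*}
with $c$ independent of $k$. Using $\abs{\nabla\theta} \le L/j$ together with the monotonicity of $F$ and $F_1$ and Lemma \ref{fsmall} (applied with $C = C_K$, say, after rescaling the argument of $F$ by the fixed constant $L$, which is harmless since $F(\lambda\,\cdot)\in\F$), the two integrands are bounded by integrable functions like $r^{-2} j^{-C(n-1)} \cdot j^{C(n-1)} = r^{-2}$ times the volume growth factor; more precisely one pairs the decay of $F(Lr/j)$ against the volume element, whose radial part grows like $\prod f_k'$, controlled by the comparison estimates of Lemma \ref{vahakpinclemma1}. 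The upshot is a bound
\begin{equation*}
  \int_{\Omega_k} \varphi(\abs{u_k-\theta}/c) \le C < \infty
\end{equation*}
uniform in $k$, hence $\int_M \varphi(\abs{u-\theta}/c) < \infty$ by Fatou.

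Finally I would upgrade this integral smallness near infinity to a pointwise statement using the Moser-type estimate. Fix $\xi \in \pinf M \cong \Ss^{n-1}$ and a sequence of points $x_i \to \xi$; for $i$ large, a ball $B(x_i,s) \subset M$ with fixed radius $s \in (0,r_S)$ lies in the region where the tail of $\int_M \varphi(\abs{u-\theta}/\nu)$ is as small as we please (this is where one needs that the ball $B(x_i,s)$ escapes every compact set, which holds since $r(x_i)\to\infty$). Lemma \ref{moserite}, applied with $\nu = c$ (enlarging $\nu$ if necessary to meet $\nu \ge \nu_0$), gives
\begin{equation*}
  \varphi\big(\abs{u(x_i)-\theta(x_i)}/\nu\big)^{n+1} \le \sup_{B(x_i,s/2)} \varphi\big(\abs{u-\theta}/\nu\big)^{n+1} \le c\int_{B(x_i,s)} \varphi\big(\abs{u-\theta}/\nu\big) \to 0,
\end{equation*}
so $\abs{u(x_i)-\theta(x_i)} \to 0$. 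Since $\theta$ is the radial (hence continuous-up-to-$\pinf M$) extension of $f$, we have $\theta(x_i)\to f(\xi)$, whence $u(x_i)\to f(\xi)$. As $\xi$ and the sequence were arbitrary, $u$ extends continuously to $\bM$ with $u|\pinf M = f$. Uniqueness follows from the maximum principle: if $u_1, u_2$ are two solutions with the same boundary values, then for any $\delta>0$ the function $u_1 - u_2$ attains its sup on a compact set (by the boundary condition), and the comparison principle for the minimal graph operator forces $u_1 \equiv u_2$.

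The main obstacle I anticipate is the passage from the uniform $L^\varphi$-type bound to the boundary-value statement, i.e. verifying carefully that the tail integrals $\int_{M\setminus B(o,\rho)} F(Lr/j) \,d\mathrm{vol}$ and the $F_1$-analogue are finite and tend to $0$ as $\rho\to\infty$: this requires balancing the super-polynomial decay of $F, F_1$ from \eqref{fexists} and \eqref{F1estim} against the volume growth, which under only a one-sided curvature bound is controlled through $j(x) \ge c\,r(x)^\phi$ and the Jacobi-field comparison of Lemma \ref{vahakpinclemma1}; the bookkeeping of constants (the role of $C_K$ entering through Lemma \ref{vahakpinclemma3} and hence the exponent $C(n-1)$ in Lemma \ref{fsmall}) is the delicate point, but Lemma \ref{fsmall} is precisely designed to absorb it.
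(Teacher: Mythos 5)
Your proposal follows essentially the same route as the paper: exhaustion by geodesic balls with approximated boundary data, uniform interior gradient estimates to extract a limit solution, the integral estimate of Lemma \ref{intestimphi} combined with Lemma \ref{fsmall} and the pinching control of the volume element (via Lemma \ref{vahakpinclemma3}) to get $\int_M\varphi(\abs{u-\theta}/\nu)<\infty$, and the Moser-type estimate of Lemma \ref{moserite} to recover the boundary values, with uniqueness by the comparison principle. The one technical caveat is that Lemma \ref{moserite} should be applied to the approximating solutions $u_k$ on $B_k$ (which do have boundary values $\theta$ and satisfy its hypotheses) and the resulting estimate passed to the limit by dominated convergence, rather than invoked directly for the limit $u$; this is exactly how the paper proceeds and is a routine repair of your argument.
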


\begin{proof}
 Let $\theta$ be the radial extension of the given Lipschitz boundary data $f\in C(\pinf M)$
 defined above. We exhaust $M$ by an increasing sequence of geodesic balls $B_k=B(o,k),\, 
 k\in\N$, and show first that there exist smooth solutions $u_k\in C^{\infty}(B_k)\cap 
 C(\bar{B}_k)$ of the minimal graph equation
    \begin{align}\label{mingraphbk}
      \left\{ \begin{array}{ll} 
	  \dv \dfrac{\nabla u_k}{\sqrt{1+\abs{\nabla u_k}^2}} = 0, & \text{in } B_k, \\
	  u_k|\p B_k = \theta|\p B_k. 
      \end{array} \right.
    \end{align}
 For this, fix $k\in\N$ and let $(\theta_i^k) \subset C^2(\p B_k)$ be a sequence that converges
 uniformly to the function $\theta$ on $\p B_k$. For every $i$ there exists a function $u_i^k
 \in C^{\infty}(B_k)$ that solves the minimal graph equation in $B_k$ and has boundary values
 $\theta_i^k$. By the Maximum principle we have
    $$
      \sup_{B_k}\abs{u_j^k - u_i^k} \le \sup_{\p B_k} \abs{\theta_j^k - \theta_i^k}
    $$
 so the sequence $(u_i^k)$ converges uniformly to some function $u_k\in C(\bar{B}_k)$. 
 In $\bar{B}_k$ the sectional curvatures are bounded, so we can apply the
 interior gradient estimate \cite[Theorem 1.1]{spruck} and obtain that $\abs{\nabla u_i^k}$
 is locally bounded independent of $i$. Therefore standard arguments and 
 regularity theory of elliptic PDEs imply that $u_i^k \to u_k$ in $C^2_{\loc}(B_k)
 \cap C(\bar{B}_k)$ and therefore $u_k$ is also a solution to the minimal graph equation
 \eqref{mingraphbk}. Moreover, the comparison principle 
 implies that
    $$
      -\max_{x\in M} \abs{\theta(x)} \le u_k \le \max_{x\in M} \abs{\theta(x)},
    $$
 so the solutions $u_k$ are bounded in $B_k$ for every $k\in\N$.
 
 Fix a compact set $K\subset M$. Then applying the interior gradient estimate 
 \cite[Theorem 1.1]{spruck}, we obtain 
    $$
      \sup_K \abs{\nabla u_k} \le c(K),
    $$
 where the constant $c(K)$ is independent of $k$. The theory of elliptic PDEs implies that
 there exists a subsequence, still denoted by $u_k$, that converges in $C^2_{\loc}(M)$
 to a solution $u\in C^{\infty}(M)$. Hence we are left to prove that $u$ extends continuously
 to the boundary $\pinf M$ and satisfies $u|\pinf M = f$.
 
 Next we will use Lemma \ref{intestimphi}, and in order to estimate the appearing integrals we 
 use geodesic polar
 coordinates $(r,v)$ for points $x\in M$. Here we denoted $r=r(x)$ and $v=\dot{\gamma}_0^{o,x}\in S_oM$.
 Let $\lambda(r,v)$ be the Jacobian for these polar coordinates. Note that then we have 
 $\lambda(r,v) \le J(r,v)^{n-1}$ where $J(x)$ denotes the supremum of $\abs{V\big(r(x)
 \big)}$ over Jacobi fields V along the geodesic $\gamma^{o,x}$ that satisfy $V_0=0, \, 
 \abs{V_0'}=1$ and $V_0'\bot \dot{\gamma}_0^{o,x}$.

 Let $\nu$ be such that it satisfies the assuptions of Lemmas \ref{intestimphi} and 
 \ref{moserite}. Applying Lemma \ref{vahakpinclemma3}, Fatou's lemma, and Lemma \ref{intestimphi} with $U=B_k$ we get 
    \begin{align}\label{useofF}
     \int_M &\varphi\big(\abs{u-\theta}/\nu\big) \le \liminf_{k\to\infty} \int_{B_k} \varphi\big(
     \abs{u_k-\theta}/\nu\big)\nonumber \\
     &\le c + c\int_{M} F( r \abs{\nabla\theta}) + 
	  c\int_{M} F_1( r^2 \abs{\nabla\theta}^2) \nonumber \\
     &= c + c\int_{R_1}^{\infty} \int_{S_oM} F( r \abs{\nabla\theta(r,v)})
	      \lambda(r,v)\, dv\,dr \nonumber \\ 
     &\qquad + c\int_{R_1}^{\infty} \int_{S_oM} F_1( r^2 \abs{\nabla\theta(r,v)}^2)
	      \lambda(r,v)\, dv\,dr \nonumber  \\
     &\le c + c\int_{R_1}^{\infty} \int_{S_oM} F\left( \frac{r}{j(r,v)}\right)
	      j(r,v)^{C_K(n-1)}\, dv\,dr  \nonumber \\ 
     &\qquad + c\int_{R_1}^{\infty} \int_{S_oM} F_1\left( \frac{r^2}{j(r,v)^2}\right)
	      j(r,v)^{C_K(n-1)}\, dv\,dr \nonumber \\
	  &< \infty.
    \end{align}
Finiteness of the last integrals follows from Lemma \ref{fsmall}.

 Let $x\in M$ and fix $s\in (0,r_S)$. For $k$ large enough, $u_k$ satisfies the assumptions
 of Lemma \ref{moserite}, and hence
    $$
      \sup_{B(x,s/2)} \varphi\big(\abs{u_k-\theta}/\nu\big)^{n+1} \le c\int_{B(x,s)}\varphi\big(
      \abs{u_k-\theta}/\nu\big).
    $$
 This and the dominated convergence theorem implies that 
    \begin{align}\label{dklestim}
     \sup_{B(x,s/2)} &\varphi\big(\abs{u-\theta}/\nu\big)^{n+1} = \sup_{B(x,s/2)} \lim_{k\to\infty} 
	  \varphi\big(\abs{u_k-\theta}/\nu\big)^{n+1}\nonumber \\
     &\le \limsup_{k\to\infty} \sup_{B(x,s/2)} \varphi\big(\abs{u_k-\theta}/\nu\big)^{n+1} \\
     &\le c \limsup_{k\to\infty} \int_{B(x,s)} \varphi\big(\abs{u_k-\theta}/\nu\big)
     = c \int_{B(x,s)} \varphi\big(\abs{u-\theta}/\nu\big). \nonumber
    \end{align}
Let $\xi \in \pinf M$ and $(x_i)$ be a sequence of points in $M$ with $x_i \to \xi$ as $i\to
\infty$. Applying the estimate \eqref{dklestim} with $x=x_i$ and fixed $s\in(0,r_S)$ we obtain,
by \eqref{useofF}, that 
    $$
      \lim_{i\to\infty} \sup_{B(x_i,s/2)} \varphi\big(\abs{u-\theta}/\nu\big)^{n+1} \le
	c \lim_{i\to\infty} \int_{B(x_i,s)}\varphi\big(\abs{u-\theta}/\nu\big) =0
    $$
 and hence $\abs{u(x_i)-\theta(x_i)} \to 0$ as $i\to \infty$. Since $\xi\in\pinf M$ was arbitrary,
 it follows that $u$ extends continuously to $\pinf M$ and satisfies $u|\pinf M = f$.

\end{proof}

\subsection{Proof of the main theorem}
Let $f\in C(\pinf M)$. As in the case of Lipschitz functions, we identify $\pinf M$ with
the unit sphere $\Ss^{n-1}\subset T_oM$. Let $(f_i)$ be a sequence of Lipschitz functions
such that $f_i \to f$ uniformly as $i\to\infty$. By Lemma \ref{lipsol} there exist solutions
$u_i \in C^{\infty}(M)\cap C(\bM)$ of the minimal graph equation \eqref{mingraph} with
the desired boundary values $u_i = f_i$ on $\pinf M$. It follows from the Maximum principle
that 
    $$
      \sup_M \abs{u_i-u_j} = \max_{\pinf M} \abs{f_i-f_j}
    $$
and consequently the sequence $u_i$ converges uniformly to a function $u\in C({\bM})$. 
Applying the interior gradient estimate \cite[Theorem 1.1]{spruck} in compact subsets of $M$
we conclude that the convergence takes place in $C(\bM)\cap C^2_{\loc}(M)$ and 
therefore $u$ is also a solution to \eqref{mingraph} in $M$ and $u=f$ on $\pinf M$. 
Regularity theory implies that $u\in C^{\infty}(M)$.

For the proof of uniqueness, suppose that $u$ and $v$ are both solutions of \eqref{mingraph}
in $M$, continuous in $\bM$ and $u=v$ on the boundary $\pinf M$. By symmetry we can
assume that $u(y)>v(y)$ for some $y\in M$. Denote $\delta = \big(u(y) - v(y)\big)/2$ and
let $U \subset \{x\in M \colon u(x) > v(x) +\delta\}$ be the component that contains $y$.
Then $U$ is a relatively compact open domain since both $u$ and $v$ are continuous and 
coincide on $\pinf M$. Furthemore $u=v+\delta$ on $\p U$ and it follows that $u=v+
\delta$ in $U$ which is a contradiction since we have $y\in U$. 

\qed

\bibliographystyle{plain}

\end{document}